\newcommand{\Parans}[1]{\left(#1\right)}
\newcommand{\CBrackets}[1]{\left\{#1\right\}}
\newcommand{\SBrackets}[1]{\left[#1\right]}
\newcommand{\aqprod}[3]{\Parans{#1;#2}_{#3}}
\newcommand{\Jac}[2]{\left(\frac{#1}{#2}\right)}
\newcommand{\spt}[1]{\mbox{\normalfont spt}\Parans{#1}}
\newcommand{\InTheoremCite}[2]{\mbox{\normalfont (#1 \cite{#2})}}
\newcommand{\SLift}[3]{\mathscr{S}_{#1,#2}\Parans{#3}}
\begin{document}

\markboth{Frank G. Garvan, Chris Jennings-Shaffer}
{Andrews' spt-function modulo 16 and 32}

%
%

\title{HECKE-TYPE CONGRUENCES FOR ANDREWS' SPT-FUNCTION MODULO 16 AND 32}

\author{FRANK G. GARVAN}

\address{Department of Mathematics, University of Florida\\
Gainesville, Florida 32611, USA\\
\email{fgarvan@ufl.edu}}

\author{CHRIS JENNINGS-SHAFFER}

\address{Department of Mathematics, University of Florida\\
Gainesville, Florida 32611, USA\\ 
\email{cjenningsshaffer@ufl.edu}}

\maketitle

\begin{abstract}
Inspired by recent congruences by Andersen with varying powers of $2$ in the modulus for partition related functions, we extend the modulo $32760$ congruences of the first author for the function $\spt{n}$. We show that a normalized form of the generating function of $\spt{n}$ is an eigenform modulo $32$ for the Hecke operators $T(\ell^2)$ for primes $\ell\ge 5$ with $\ell\equiv 1,11,17,19\pmod{24}$, and an eigenform modulo $16$ for $\ell\equiv 13,23\pmod{24}$. 
\end{abstract}

\keywords{Number theory; Andrews' spt-function; congruences; partitions; modular forms.}

\ccode{Mathematics Subject Classification 2010: 11P82, 11P83, 11F33, 11F37, 05A17}

\section{Introduction and Statement of Results}

In this paper we strengthen a congruence on the smallest parts function. We recall that Andrews in \cite{Andrews} defined the function 
$\spt{n}$ as the number of smallest parts in the partitions of $n$. This function is known to satisfy many interesting and striking congruences, we list a few. Andrews proved
\begin{align} 
	\spt{5n + 4} &\equiv 0 \pmod{5},\\
	\spt{7n + 5} &\equiv 0 \pmod{7},\\
	\spt{13n + 6} &\equiv 0 \pmod{13}.
\end{align}
The first author proved in \cite{Garvan3} for $a,b,c\ge 3$ and $\delta_a,\lambda_b,\gamma_c$ the least nonnegative residues of the reciprocals of $24$ modulo $5^a$, $7^b$, and $13^c$ respectively, that
{\allowdisplaybreaks\begin{align}
	\spt{5^an + \delta_a} + 5\spt{5^{a-2}n + \delta_{a-2}} &\equiv 0 \pmod{5^{2a-3}},\\
	\spt{7^bn + \lambda_b} + 7\spt{7^{b-2}n + \lambda_{b-2}} &\equiv 0 \pmod{7^{\lfloor\frac{1}{2}(3b-2)\rfloor}},\\
	\spt{13^cn + \gamma_c} - 13\spt{13^{c-2}n + \gamma_{c-2}} &\equiv 0 \pmod{13^{c-1}}.
\end{align}}
In \cite{Ono3} Ono proved for any prime $\ell\ge 5$  that 
\begin{align}\label{eq1}
	\spt{\frac{\ell^{3}n +1}{24}}  
	& \equiv
	\Jac{3}{\ell}\spt{\frac{\ell n + 1}{24}} 
 \pmod{\ell}.
\end{align}
The congruence in (\ref{eq1}) was generalized in \cite{ABL} by Ahlgren, Bringmann, and Lovejoy to
\begin{align}\label{ABLCongruence}
	\spt{\frac{\ell^{2m+1}n +1}{24}} 
	& \equiv 
	\Jac{3}{\ell}\spt{\frac{\ell^{2m-1}n+1}{24}} 
\pmod{\ell^m}
\end{align}
for $m\ge 1$ and prime $\ell\ge 5$. This was accomplished by relating the Hecke operator $T(\ell^{2m})$ to the Hecke operators for lower powers of $\ell$. Recently Ahlgren and Kim in \cite{AK} have explained that the congruence in (\ref{ABLCongruence}) and others actually come from equalities, rather than congruences, between the images of certain mock modular forms under Hecke operators. This is done by a systematic study of how Hecke operators transform certain mock modular grids. 

To state our results, we first need some definitions. As in \cite{Ono3}, \cite{Garvan3}, \cite{Garvan1} we define
\begin{align}
	\mathbf{a}(n) := 12\spt{n} + (24n - 1)p(n),
\end{align}
for $n \ge 0$, and
\begin{align}
	\alpha(z) := \sum_{n=0}^\infty \mathbf{a}(n)q^{n-\frac{1}{24}},
\end{align}
where $q = \exp(2\pi iz)$ and $Im(z) > 0$. We let $\chi_{12}$ be the primitive Dirichlet character modulo $12$ given by
\begin{align}
	\chi_{12}(n) &= 	
	\left\{
   	\begin{array}{lll}
      	1 & \hspace{15pt}\mbox{ if } n\equiv \pm 1 \pmod{12}\\
       	-1 & \hspace{15pt}\mbox{ if } n\equiv \pm 5 \pmod{12}\\
			0 & \hspace{15pt}\mbox{ otherwise}.
     	\end{array}
	\right.
\end{align}
The Dedekind eta function is given by
\begin{align}
	\eta(z):=q^{\frac{1}{24}}\prod_{n=1}^\infty(1-q^n). 
\end{align}
As in \cite{Bringmann} we set 
\begin{align}
\mathcal{M}(z):= \alpha(24z) -\frac{3i}{\pi\sqrt{2}}\int_{-\overline{z}}^{i\infty}
	\frac{\eta(24\tau d\tau)}{(-i(\tau+z))^{\frac{3}{2}}}.
\end{align}
In \cite{Bringmann} Bringmann showed $\mathcal{M}(z)$ is a weight $\frac{3}{2}$ weak Maass form  on $\Gamma_0(576)$ with Nebentypus $\chi_{12}$ and 
that $\alpha(24z)$ is the holomorphic part of $\mathcal{M}(z)$. 

For a Dirichlet character $\chi$ and prime $\ell$, we have the weight $\frac{3}{2}$ Hecke operator $T_\chi(\ell^2)$. We know $T_\chi(\ell^2)$ operates on $q$-series expansions by
\begin{align}
	\Parans{\sum b(n)q^n}\mid T_\chi(\ell^2)
	&= 
	\sum\Parans{
		b(\ell^2n)
		+\chi(\ell)\Jac{-n}{\ell}b(n)
		+\chi(\ell^2)\ell b(n/\ell^2)   
 		}q^n.
\end{align}

In $\cite{Ono3}$ Ono showed that for $\ell \ge 5$ prime, the operator
\begin{align}
T_{\chi_{12}}(\ell^2) - \chi_{12}(\ell)\ell(1+\ell)
\end{align}
annihilates the nonholomorphic part of $\mathcal{M}(z)$. Furthermore, the function $\mathcal{M}_\ell(z/24)$ is a weakly holomorphic
modular form of weight $\frac{3}{2}$ for the full modular group, where
\begin{align}
	\mathcal{M}_\ell(z) 
	&:= 
	M(z)\mid T_{\chi_{12}}(\ell^2)  - \chi_{12}(\ell)(1+\ell)\mathcal{M}(z)
	\nonumber\\ 
	&= \alpha(24z)\mid T_{\chi_{12}}(\ell^2) - \chi_{12}(\ell)(1 + \ell)\alpha(24z).
\end{align}
Additionally he proved the following two theorems. 
\begin{theorem}\label{TheoremModularM}$\InTheoremCite{Ono}{Ono3}$
If $\ell \ge 5$ is prime then the function $\mathcal{M}_\ell(z/24)\eta(z)^{\ell^2}$ is an entire modular form of weight $\frac{1}{2}(\ell^2+3)$
for the full modular group $\Gamma(1)$.
\end{theorem}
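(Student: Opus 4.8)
The plan is to assemble the claim from three ingredients: the fact recalled just above that $\mathcal{M}_\ell(z/24)$ is a weakly holomorphic modular form of weight $\frac{3}{2}$ on $\Gamma(1)$; the standard fact that $\eta(z)^{\ell^2}$ is a holomorphic modular form of weight $\frac{\ell^2}{2}$ on $\Gamma(1)$, with multiplier system $\nu_\eta^{\ell^2}$ (where $\nu_\eta$ denotes the multiplier of $\eta$), holomorphic and nonvanishing on the upper half-plane and vanishing at the cusp $\infty$ to order exactly $\frac{\ell^2}{24}$ in $q$; and a direct computation of the principal part of $\mathcal{M}_\ell(z/24)$ at $\infty$. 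Granting the first ingredient, the product $\mathcal{M}_\ell(z/24)\eta(z)^{\ell^2}$ is automatically holomorphic on the upper half-plane and transforms with weight $\frac{1}{2}(\ell^2+3)$; the two remaining points are that it carries the trivial multiplier system and that it is holomorphic at the single cusp $\infty$ of $\Gamma(1)$.

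For the multiplier: since $\ell\ge5$ is prime it is coprime to $24$, so $\ell^2\equiv1\pmod{24}$. The expansion $\alpha(z)=\sum_{n\ge0}\mathbf{a}(n)q^{n-1/24}$ shows that $\mathcal{M}_\ell(z/24)$ is supported on $q$-exponents congruent to $-\frac{1}{24}$ modulo $1$, so it carries the multiplier $\nu_\eta^{-1}$ of $1/\eta$. Since $\nu_\eta$ has order $24$ and $\ell^2\equiv1\pmod{24}$, the product has multiplier $\nu_\eta^{-1}\nu_\eta^{\ell^2}=\nu_\eta^{\ell^2-1}=1$; consistently, $\ell$ odd gives $\ell^2+3\equiv4\pmod8$, so $\frac{1}{2}(\ell^2+3)$ is an even integer.

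For holomorphy at $\infty$ I would compute the principal part of $\mathcal{M}_\ell(z)=\alpha(24z)\mid T_{\chi_{12}}(\ell^2)-\chi_{12}(\ell)(1+\ell)\alpha(24z)$ from the displayed action of $T_{\chi_{12}}(\ell^2)$. Writing $\alpha(24z)=\sum_N c(N)q^N$, the coefficient $c(N)$ is nonzero only for $N\equiv-1\pmod{24}$ and $N\ge-1$, with $c(-1)=\mathbf{a}(0)=12\spt{0}+(24\cdot0-1)p(0)=-1$. Because $\ell^2\equiv1\pmod{24}$, among the three terms $c(\ell^2N)$, $\chi_{12}(\ell)\Jac{-N}{\ell}c(N)$, and $\chi_{12}(\ell^2)\ell\,c(N/\ell^2)$ the only ones contributing negative $q$-exponents occur at $N=-\ell^2$ (coefficient $\chi_{12}(\ell^2)\ell\,c(-1)=-\ell$, from the third term) and at $N=-1$; subtracting $\chi_{12}(\ell)(1+\ell)\alpha(24z)$ then yields $\mathcal{M}_\ell(z)=-\ell\,q^{-\ell^2}+\chi_{12}(\ell)\ell\,q^{-1}+\cdots$, with no further negative powers of $q$. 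Hence $\mathcal{M}_\ell(z/24)=-\ell\,q^{-\ell^2/24}+\chi_{12}(\ell)\ell\,q^{-1/24}+\cdots$ has a pole of order exactly $\frac{\ell^2}{24}$ at $\infty$, and multiplying by $\eta(z)^{\ell^2}=q^{\ell^2/24}\prod_{n\ge1}(1-q^n)^{\ell^2}$ clears both negative terms, giving $\mathcal{M}_\ell(z/24)\eta(z)^{\ell^2}=-\ell+O(q)$. Thus the product is holomorphic on the upper half-plane and at $\infty$, hence an entire modular form of weight $\frac{1}{2}(\ell^2+3)$ on $\Gamma(1)$ (and a nonzero one, since its constant term is $-\ell$).

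The essential input here is the assertion that $\mathcal{M}_\ell(z/24)$ is modular on the full modular group rather than merely on $\Gamma_0(576)$, which is exactly the content of Ono's observation that $T_{\chi_{12}}(\ell^2)-\chi_{12}(\ell)\ell(1+\ell)$ annihilates the nonholomorphic part of $\mathcal{M}(z)$; once that is granted, the main step is the principal-part computation. The one place where care pays off is checking that the single factor $\eta(z)^{\ell^2}$ suffices to clear the pole: the support condition $N\equiv-1\pmod{24}$ for $\alpha(24z)$, combined with $\ell^2\equiv1\pmod{24}$, is precisely what forces the principal part of $\mathcal{M}_\ell$ down to just the two terms $q^{-\ell^2}$ and $q^{-1}$, the worse of which has pole order exactly matching the vanishing order $\frac{\ell^2}{24}$ of $\eta(z)^{\ell^2}$.
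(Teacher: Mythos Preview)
The paper does not give its own proof of this theorem; it is quoted as a result of Ono with the citation \cite{Ono3}, and in fact the sentence immediately preceding the theorem already records the harder statement that $\mathcal{M}_\ell(z/24)$ is a weakly holomorphic modular form of weight $\tfrac{3}{2}$ for $\Gamma(1)$. Your argument correctly and cleanly deduces the theorem from that input: the multiplier computation via $\ell^2\equiv 1\pmod{24}$ is right, and your principal-part calculation is accurate (coefficient $-\ell$ at $q^{-\ell^2}$ from the $c(N/\ell^2)$ term, coefficient $\chi_{12}(\ell)\ell$ at $q^{-1}$ after the cancellation between the Legendre-symbol term and the subtracted $(1+\ell)$ term), so $\eta(z)^{\ell^2}$ exactly clears the pole and the product has constant term $-\ell$. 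There is nothing to compare against in the paper itself, but your write-up is a correct reconstruction of why the quoted result holds once the weak holomorphicity on $\Gamma(1)$ is granted.
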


\begin{theorem}\label{OnoCongruence}$\InTheoremCite{Ono}{Ono3}$
If $\ell \ge 5$ is prime then
\begin{align}
&\spt{\ell^2n - s_\ell} + \chi_{12}(\ell)\Jac{1 - 24n}{\ell}\spt{n} + \ell\spt{\frac{n+s_\ell}{\ell^2}}
\nonumber\\&\equiv \chi_{12}(\ell)(1+\ell)\spt{n} \pmod{3},
\end{align}
where 
\begin{align}\label{DefOfSl}
s_\ell &= \frac{\ell^2-1}{24}.
\end{align}
\end{theorem}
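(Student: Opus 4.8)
The plan is to compute the Fourier coefficients of $\mathcal{M}_\ell$ explicitly, split them via $\mathbf{a}(n)=12\spt{n}+(24n-1)p(n)$, and then exhibit the $\spt$-contribution as a quasimodular form on $\Gamma(1)$ whose reduction modulo $3$ is forced to vanish by its order at the cusp. First I would apply the displayed action of $T_{\chi_{12}}(\ell^2)$ to $\alpha(24z)=\sum_{n\ge 0}\mathbf{a}(n)q^{24n-1}$. Since $\ell\ge 5$ forces $\ell^2\equiv 1\pmod{24}$, the congruence conditions on indices are clean: one records $24(\ell^2 n-s_\ell)-1=\ell^2(24n-1)$, $24\cdot\frac{n+s_\ell}{\ell^2}-1=\frac{24n-1}{\ell^2}$, and $\ell^2\mid 24n-1\iff\ell^2\mid n+s_\ell$, and reads off that the coefficient of $q^{24n-1}$ in $\mathcal{M}_\ell(z)$ equals $c(n):=\mathbf{a}(\ell^2 n-s_\ell)+\chi_{12}(\ell)\Jac{1-24n}{\ell}\mathbf{a}(n)+\ell\,\mathbf{a}\big(\frac{n+s_\ell}{\ell^2}\big)-\chi_{12}(\ell)(1+\ell)\mathbf{a}(n)$, with $\mathbf{a}$ taken to vanish at negative or non-integral arguments.

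Next I would substitute $\mathbf{a}(m)=12\spt{m}+(24m-1)p(m)$ and use the index identities above to regroup, obtaining $c(n)=12\,S_\ell(n)+(24n-1)R_\ell(n)$, where $S_\ell(n):=\spt{\ell^2 n-s_\ell}+\chi_{12}(\ell)\Jac{1-24n}{\ell}\spt{n}+\ell\,\spt{\frac{n+s_\ell}{\ell^2}}-\chi_{12}(\ell)(1+\ell)\spt{n}$ is precisely the combination in the theorem (so the assertion is $S_\ell(n)\equiv 0\pmod 3$ for all $n\ge 0$), and $R_\ell(n)$ is its analogue with $\spt$ replaced by $p$. A direct check shows $S_\ell(n)=0$ for $-s_\ell\le n\le 0$ — every $\spt$ term has a negative or non-integral argument, and $\spt{0}=0$ — so $\sum_n S_\ell(n)q^{n+s_\ell}$ has a zero of order at least $s_\ell+1$ at $q=0$.

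Now I would package the $\spt$-part modularly. Differentiating $\frac{1}{\eta(24z)}=\sum_{n\ge 0}p(n)q^{24n-1}$ gives $\sum_{n\ge 0}(24n-1)p(n)q^{24n-1}=-\frac{E_2(24z)}{\eta(24z)}$, hence $12\sum_{n\ge 0}\spt{n}q^{24n-1}=\alpha(24z)+\frac{E_2(24z)}{\eta(24z)}$. Applying $T_{\chi_{12}}(\ell^2)-\chi_{12}(\ell)(1+\ell)$ to this identity and then passing $z\mapsto z/24$ and multiplying by $\eta(z)^{\ell^2}$ produces $12\,G_\ell$, where $G_\ell:=\Parans{\sum_n S_\ell(n)q^{n+s_\ell}}\aqprod{q}{q}{\infty}^{\ell^2}$. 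The $\alpha(24z)$-term becomes $\mathcal{M}_\ell(z/24)\,\eta(z)^{\ell^2}$, which is an entire modular form on $\Gamma(1)$ of weight $\frac{1}{2}(\ell^2+3)$ by Theorem \ref{TheoremModularM}; and by the same reasoning applied to the quasi-modular form $E_2(24z)/\eta(24z)$ (using the standard non-holomorphic completion of $E_2$), the $\frac{E_2(24z)}{\eta(24z)}$-term becomes an entire quasimodular form on $\Gamma(1)$ of weight $\frac{1}{2}(\ell^2+3)$ and depth at most $1$. Hence $12\,G_\ell$, and therefore $G_\ell$ itself — which has integral Fourier coefficients because the $S_\ell(n)$ are integers — is an entire quasimodular form on $\Gamma(1)$ of weight $k:=\frac{1}{2}(\ell^2+3)$ and depth $\le 1$.

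The rest is a short argument modulo $3$. The weight $k=\frac{1}{2}(\ell^2+3)$ is an even integer because $\ell^2\equiv 1\pmod 8$, and $\lfloor k/12\rfloor=s_\ell$ since $k/12=s_\ell+\frac16$. Since $E_2\equiv E_4\equiv E_6\equiv 1\pmod 3$, the reduction modulo $3$ of the weight-$k$, depth-$\le 1$ quasimodular form $G_\ell$ is a polynomial in $\widetilde{\Delta}=q\aqprod{q}{q}{\infty}^{24}$ of degree at most $\lfloor k/12\rfloor=s_\ell$; that is, $G_\ell\equiv\sum_{c=0}^{s_\ell}\nu_c\widetilde{\Delta}^{\,c}\pmod 3$ for some integers $\nu_c$. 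But $G_\ell$ has a zero of order at least $s_\ell+1$ at $q=0$, so the coefficients of $q^0,q^1,\dots,q^{s_\ell}$ in $G_\ell$ all vanish; since $\widetilde{\Delta}^{\,c}=q^{c}+O(q^{c+1})$ this forces each $\nu_c\equiv 0\pmod 3$, whence $G_\ell\equiv 0\pmod 3$. As $q^{s_\ell}\aqprod{q}{q}{\infty}^{\ell^2}$ is a unit power series, it follows that $S_\ell(n)\equiv 0\pmod 3$ for every $n\ge 0$, which is exactly the statement of the theorem. The step I expect to be the main obstacle is the one in the previous paragraph where the $E_2(24z)/\eta(24z)$-term is shown to become a genuine level-one quasimodular form of weight $k$ and bounded depth — the analogue of Theorem \ref{TheoremModularM} — since this requires controlling the $E_2$-anomaly of $E_2(24z)/\eta(24z)$ under $T_{\chi_{12}}(\ell^2)$ and under the change of level; once that is granted, the remaining reasoning modulo $3$ is elementary.
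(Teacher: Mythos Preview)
The paper does not actually prove Theorem \ref{OnoCongruence}; it is quoted as Ono's result from \cite{Ono3}, and is in fact subsumed by the modulus-$72$ congruence (\ref{toBeImproved}) quoted from \cite{Garvan1}. So there is no ``paper's own proof'' to compare against directly. That said, your approach is exactly the method the paper employs in Section~3 for its main theorem, specialized from modulus $128$ to modulus $3$: split $\mathcal{A}_\ell=12\mathcal{S}_\ell+\Xi_\ell$, multiply by $\eta\Delta^{s_\ell}$, reduce modulo the prime power, and use the vanishing of $\mathcal{S}_\ell\eta\Delta^{s_\ell}$ to order $s_\ell+1$ to force all basis coefficients to vanish. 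Your observation that $E_2\equiv E_4\equiv E_6\equiv 1\pmod 3$ makes the mod-$3$ argument especially clean, since no residual $\Delta^{s_\ell+j}$ terms survive (unlike the mod-$128$ computation in (\ref{congruenceForS})).

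The step you flag as the main obstacle---that the $E_2(24z)/\eta(24z)$ contribution becomes an honest level-one quasimodular form of weight $\tfrac{\ell^2+3}{2}$ and depth $\le 1$ after applying $T_{\chi_{12}}(\ell^2)-\chi_{12}(\ell)(1+\ell)$ and multiplying by $\eta^{\ell^2}$---is precisely the content of the paper's cited Theorem~8 (Garvan, \cite{Garvan1}), which gives the explicit formula for $\ell\,\Xi_\ell(z)\eta(z)\Delta(z)^{s_\ell}$ as a $\mathbb{Z}$-linear combination of monomials in $E_2,E_4,E_6,\Delta$. So your obstacle is already resolved in the paper's preliminaries. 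One small point to tighten: you pass from ``$G_\ell$ has integral Fourier coefficients and is quasimodular of weight $k$'' to ``$G_\ell\bmod 3$ is a polynomial in $\Delta$ of degree $\le s_\ell$''. Integrality of Fourier coefficients does not by itself guarantee $3$-integrality of the coefficients in the $E_2^aE_4^bE_6^c\Delta^d$-basis; the clean way to get this is to work with $12\ell\,G_\ell$ and invoke the explicit integral expressions in (\ref{AByIntegers}) and Theorem~8, exactly as the paper does in (\ref{SMess}). Also, your description of $R_\ell(n)$ as ``$\spt$ replaced by $p$'' is slightly off: the index identities introduce extra factors of $\ell^2$ and $\ell^{-1}$, so the correct companion series is $\Xi_\ell$ built from $d(n)=(24n-1)p(n)$, as in (\ref{AByXiAndS}).
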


Theorem \ref{OnoCongruence} was conjectured by the first author and the first author extended this to the following. 
\begin{theorem}$\InTheoremCite{Garvan}{Garvan1}$\begin{enumerate}
\item If $\ell \ge 5$ is prime then
\begin{align}
&\spt{\ell^2n - s_\ell} + \chi_{12}(\ell)\Jac{1 - 24n}{\ell}\spt{n} + \ell\spt{\frac{n+s_\ell}{\ell^2}}
\nonumber\\\label{toBeImproved}
&\equiv \chi_{12}(\ell)(1+\ell)\spt{n} \pmod{72}.
\end{align}
\item If $\ell\ge 5$ is prime, $t=5,7,$ or $13$ and $\ell\not=t$ then
\begin{align}
&\spt{\ell^2n - s_\ell} + \chi_{12}(\ell)\Jac{1 - 24n}{\ell}\spt{n} + \ell\spt{\frac{n+s_\ell}{\ell^2}}
\nonumber\\
&\equiv \chi_{12}(\ell)(1+\ell)\spt{n} \pmod{t}.
\end{align}
\end{enumerate}
\end{theorem}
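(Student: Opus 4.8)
The plan is to deduce both parts from Ono's Theorem~\ref{TheoremModularM}. First I would make the $q$-expansion of $\mathcal{M}_\ell(z/24)$ explicit: carrying out the action of $T_{\chi_{12}}(\ell^2)$ on $\alpha(24z)=\sum_{n\ge0}\mathbf a(n)q^{24n-1}$, and using $\ell^2\equiv1\pmod{24}$ together with the elementary identities $24(\ell^2 n-s_\ell)-1=\ell^2(24n-1)$ and $24\cdot\tfrac{n+s_\ell}{\ell^2}-1=\tfrac{24n-1}{\ell^2}$, one obtains
\[
\mathcal{M}_\ell(z/24)=\sum_n\bigl(12\,S_\ell(n)+T_\ell(n)\bigr)q^{\,n-\frac{1}{24}},
\]
where $S_\ell(n)$ is exactly the left-hand side minus the right-hand side of the congruence to be proved and $T_\ell(n)$ is the same linear combination with $12\spt{m}$ replaced by $(24m-1)p(m)$ in the appropriate arguments. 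By Theorem~\ref{TheoremModularM}, $g_\ell:=\mathcal{M}_\ell(z/24)\,\eta(z)^{\ell^2}$ is an entire modular form of weight $\tfrac12(\ell^2+3)$ on $\Gamma(1)$ with integral $q$-expansion and constant term $-\ell$, and $\sum_n(12S_\ell(n)+T_\ell(n))q^{n-\frac1{24}}=\eta(z)^{-\ell^2}g_\ell$. Hence part (1) is equivalent to $12S_\ell(n)\equiv0\pmod{864}$ (as $864=12\cdot72$) and part (2) to $12S_\ell(n)\equiv0\pmod{12t}$; that is, it suffices to control the $q$-expansion of $\eta(z)^{-\ell^2}g_\ell$ modulo $864=2^5\cdot3^3$ and modulo $12t$, after subtracting off the explicit partition part $T_\ell(n)$.

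To handle $T_\ell(n)$ I would use the identity $\alpha(z)=12\sum_{n\ge0}\spt{n}q^{n-\frac1{24}}-\eta(z)^{-1}E_2(z)$, which follows from $\tfrac{1}{2\pi i}(\eta^{-1})'=-\tfrac1{24}\eta^{-1}E_2$; applying $T_{\chi_{12}}(\ell^2)-\chi_{12}(\ell)(1+\ell)$ to its $24z$-rescaling shows that $T_\ell(n)$ is, up to sign, the coefficient of $q^{24n-1}$ in the transform of $\eta(24z)^{-1}E_2(24z)$, which --- since $E_2\equiv1\pmod{24}$ --- agrees modulo $24$ with the transform of $\eta(24z)^{-1}$, the remaining discrepancy being an Eisenstein combination whose $2$-, $3$-, and mod-$t$ valuations are controlled by von Staudt--Clausen. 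For part (2), the residue classes with $t\mid24n-1$ are immediate from Andrews' congruences $\spt{5n+4}\equiv\spt{7n+5}\equiv\spt{13n+6}\equiv0$: these say precisely that $t\mid24m-1$ forces $t\mid\spt{m}$, and when $t\mid24n-1$ every $\spt$-term appearing in $S_\ell(n)$ has argument $m$ with $t\mid24m-1$, so $S_\ell(n)\equiv0\pmod t$ at once. In the complementary classes I would reduce $g_\ell$ modulo $t$, lower its weight into a bounded range using $E_{t-1}\equiv1\pmod t$, and identify the reduction by a Sturm bound, invoking the first author's $5$-, $7$-, and $13$-adic congruences --- which assert that $\spt$ already satisfies a Hecke-type relation modulo $t$ at these levels --- to conclude $t\mid S_\ell(n)$.

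For part (1) I would argue modulo $8$ and modulo $9$ separately. Modulo $9$, Ono's Theorem~\ref{OnoCongruence} is the first-power statement ($3\mid S_\ell(n)$), and I would upgrade it to $9\mid S_\ell(n)$: since $E_6\equiv1\pmod9$ one can bring $g_\ell\bmod9$ down to a bounded weight and finish by a Sturm bound, with the extra factor of $3$ coming in part from the $E_2$-correction in the previous paragraph (which contributes a factor $24$). Modulo $8$, I would use that $\ell^2\equiv1\pmod{24}$ gives $\eta(z)^{\ell^2}=\eta(z)\,\Delta(z)^{s_\ell}$ and exploit the lacunarity of $\eta=q^{1/24}\sum_k(-1)^kq^{(3k^2-k)/2}$ and of $\eta^3=q^{1/8}\sum_{k\ge0}(-1)^k(2k+1)q^{k(k+1)/2}$ --- so that $\Delta=(\eta^3)^8$ has a sparse reduction modulo powers of $2$ --- to divide $g_\ell$ by $\eta(z)^{\ell^2}$ and read off the $2$-adic valuation of $12S_\ell(n)+T_\ell(n)$.

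The step I expect to be the main obstacle is this modulo-$8$ (purely $2$-adic) analysis: unlike the mod-$t$ situation, the algebra of level-one modular forms modulo $2$ collapses to a polynomial ring in $\Delta$ alone, so $g_\ell$ cannot be located modulo $2,4,8$ by a dimension count, and the $2$-adic bookkeeping must instead be carried out either by the explicit division of $g_\ell$ by $\eta(z)^{\ell^2}$ (using the sparsity of $\eta,\eta^3,\Delta$) or by tracking the $2$-adic valuations of the $\mathbf a(n)$ directly through $T_{\chi_{12}}(\ell^2)$. Once $\eta(z)^{-\ell^2}g_\ell$ is pinned down modulo $8$, modulo $9$ and modulo $t$, subtracting $T_\ell(n)$ and dividing by $12$ yields $S_\ell(n)\equiv0\pmod{72}$ and $S_\ell(n)\equiv0\pmod t$; the refinement to powers of $2$ depending on $\ell\bmod24$ that the rest of the paper establishes is not needed for this uniform statement.
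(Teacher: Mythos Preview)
This theorem is quoted from \cite{Garvan1}, not proved in the present paper; the machinery of Sections~\ref{prelim}--\ref{proof}, used here for the sharper Theorem~\ref{MainTheorem}, is the same machinery \cite{Garvan1} develops for this statement, and your outline misses its central device. Your plan for each modulus is to reduce the weight of $g_\ell=\mathcal{M}_\ell(z/24)\eta(z)^{\ell^2}$ via $E_{p-1}\equiv1$ and then ``finish by a Sturm bound''. But a Sturm check certifies a congruence between two \emph{fixed} forms, whereas $g_\ell$ and its weight $\tfrac12(\ell^2+3)$ vary with $\ell$; no single finite computation covers all primes. You correctly flag the mod-$8$ case as unresolved, but the mod-$9$ and mod-$t$ sketches share the same uniformity gap. (Your appeal in part~(2) to ``the first author's $5$-, $7$-, $13$-adic congruences --- which assert that the spt-function already satisfies a Hecke-type relation modulo $t$'' is also circular: that Hecke-type relation is exactly part~(2).)

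The idea you are missing is the vanishing argument displayed in (\ref{SMess})--(\ref{congruenceForS}). Write $\ell\mathcal{A}_\ell\eta\Delta^{s_\ell}=\sum_{n=1}^{s_\ell}b_{n,\ell}E_4^{3n-1}E_6\Delta^{s_\ell-n}$ in the basis (\ref{preliminariesEq2}) with unknown integers $b_{n,\ell}$, and use the closed formula for $\ell\Xi_\ell\eta\Delta^{s_\ell}$ in terms of Ono's coefficients $c_{n,\ell}$ (the last theorem of Section~\ref{prelim}). After replacing $E_2$ by a polynomial in $E_4,E_6,\Delta$ congruent to it modulo the working modulus $M$, the difference $12\ell\,\mathcal{S}_\ell\eta\Delta^{s_\ell}$ becomes, modulo $M$, an integral combination of the basis elements $E_4^{3n-1}E_6\Delta^{s_\ell-n}$ (each of $q$-order $s_\ell-n\le s_\ell$) together with a bounded list of $\Delta^{s_\ell+k}$ terms whose coefficients involve only finitely many $c_{k,\ell}$. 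Since $\mathcal{S}_\ell$ has $q$-order at least $s_\ell+1$, every basis coefficient is \emph{forced} to vanish modulo $M$, uniformly in $\ell$, with no computation of the $b_{n,\ell}$ required. One is then left with a fixed finite list of $c_{k,\ell}$ to control, independent of $\ell$; for the moduli $72$, $5$, $7$, $13$ this is straightforward, and it is precisely this residual step, carried out modulo $128$, that occupies Propositions~\ref{prop1}--\ref{prop3} of the present paper.
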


In \cite{Andersen} Andersen considered congruences for various partition related functions and found the power of $2$ in the modulus can vary. For example, with $\overline{\mbox{spt}1}(n)$ denoting the number of smallest parts in the overpartitions of $n$ having smallest part odd, he proved the following congruence.

\begin{theorem} Let $\ell$ be an odd prime, and define
\begin{align}
	\alpha &:= 
		\left\{\begin{array}{lll}
			6 & \mbox{ if } & \ell \equiv 3 \pmod{8}  
			\\
			7 & \mbox{ if } & \ell \equiv 5,7 \pmod{8}
			\\
			8 & \mbox{ if } & \ell \equiv 1 \pmod{8}.
		\end{array}\right.
\end{align}
Then for $t\in\{2^\alpha,3,5\}$, $\ell\not=t$, and $n\ge 1$, we have
\begin{align}
	\overline{\mbox{\normalfont spt1}}(\ell^2 n)
	+\Jac{-n}{\ell}\overline{\mbox{\normalfont spt1}}(n)
	+\ell\overline{\mbox{\normalfont spt1}}(n/\ell^2)
	&\equiv	
	(1+\ell)\overline{\mbox{\normalfont spt1}}(n) \pmod{t}.	
\end{align}
\end{theorem}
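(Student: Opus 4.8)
The plan is to follow Ono's treatment of $\spt{n}$ (Bringmann's weak Maass form $\mathcal{M}(z)$, Ono's Theorems~\ref{TheoremModularM} and~\ref{OnoCongruence}, and the first author's extension) and to transplant it to $\overline{\mbox{\normalfont spt1}}$, with the arithmetic at the prime $2$ carried out by hand. First I would record the relevant completion. There is a fixed integral linear combination $\mathbf{a}^{(1)}(n)$ of $\overline{\mbox{\normalfont spt1}}(n)$ and $(\text{affine in }n)\overline{p}(n)$ --- the $\overline{\mbox{\normalfont spt1}}$-analogue of $\mathbf{a}(n)$ --- such that $\sum_{n\ge 0}\mathbf{a}^{(1)}(n)q^{n}$ is the holomorphic part of a weight $\frac{3}{2}$ harmonic Maass form $\mathcal{M}^{(1)}(z)$ on $\Gamma_{0}(N)$ with $16\mid N$ and \emph{trivial} Nebentypus, whose nonholomorphic part is the period integral of a fixed weight $\frac{1}{2}$ theta series $\theta_{1}$ (this is the $\overline{\mbox{\normalfont spt1}}$-analogue of Bringmann's description of $\mathcal{M}(z)$ recalled above). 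The triviality of the Nebentypus is exactly why the eigenvalue below is $1+\ell$ rather than a character twist of it. I would also note that the $\overline{p}$-part of $\mathbf{a}^{(1)}(n)$, coming from a genuine weight $\frac{3}{2}$ modular form, already satisfies the asserted Hecke relation identically, so it may be ignored when chasing the congruence.

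Second, for an odd prime $\ell$ I would check that $T(\ell^{2})-(1+\ell)$ --- using the action of $T_\chi(\ell^{2})$ displayed above with $\chi$ trivial --- annihilates the nonholomorphic part of $\mathcal{M}^{(1)}(z)$. Since that part is a period integral of the fixed theta series $\theta_{1}$, the Hecke operator acts on it through the Hecke action on $\theta_{1}$, and the vanishing reduces to the classical fact that a weight $\frac{1}{2}$ theta series is a $T(\ell^{2})$-eigenform. Hence
\[
\mathcal{M}^{(1)}_{\ell}(z):=\mathcal{M}^{(1)}(z)\mid T(\ell^{2})-(1+\ell)\,\mathcal{M}^{(1)}(z)
\]
is a weakly holomorphic modular form of weight $\frac{3}{2}$, and comparing its $q$-expansion with the definition of $T_\chi(\ell^{2})$ shows the coefficient of $q^{n}$ to be
\[
\mathbf{a}^{(1)}(\ell^{2}n)+\Jac{-n}{\ell}\mathbf{a}^{(1)}(n)+\ell\,\mathbf{a}^{(1)}(n/\ell^{2})-(1+\ell)\mathbf{a}^{(1)}(n),
\]
up to a principal part at the cusps whose order I would need to bound uniformly in $\ell$; discarding the $\overline{p}$-contribution leaves precisely the $\overline{\mbox{\normalfont spt1}}$-combination in the theorem.

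Third --- the arithmetic core --- I would clear the poles of $\mathcal{M}^{(1)}_{\ell}$ by multiplying by $\eta(z)^{\ell^{2}}$ (or a comparable $\eta$-power) together with a bounded $\eta$-product supported at the cusps above $2$, producing an \emph{entire} modular form $F_{\ell}$ of weight $\frac{1}{2}(\ell^{2}+c)$ for an absolute constant $c$, on a fixed group, with $2$-integral Fourier coefficients. For the odd primes $t=3$ and $t=5$ one then studies $F_{\ell}$ modulo $t$: using Sturm's bound together with identities such as $\eta(z)^{t}\equiv\eta(tz)\pmod{t}$, the vanishing modulo $t$ of the $q^{n}$-coefficients of $F_{\ell}$ --- hence of the $\overline{\mbox{\normalfont spt1}}$-combination, for $\ell\ne t$ --- is reduced to a finite computation. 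For $t=2^{\alpha}$ the same outline applies, but the multiplier relating $F_{\ell}$ to the $\overline{\mbox{\normalfont spt1}}$-combination is now a power of $2$, and what must be determined is the exact $2$-adic valuation of the coefficients of $F_{\ell}$: it equals $\alpha$, and it jumps according to $\ell\bmod 8$, which is the origin of the case split in the definition of $\alpha$.

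I expect this last $2$-adic step to be the main obstacle. Two things must be controlled uniformly in $\ell$: (i) the order of the pole of $\mathcal{M}^{(1)}_{\ell}$ at \emph{every} cusp of the level, so that one bounded $\eta$-product clears all of them and the weight of $F_{\ell}$ remains $\frac{1}{2}(\ell^{2}+c)$; and (ii) the exact power of $2$ dividing the coefficients of $F_{\ell}$. The dependence on $\ell\bmod 8$ should come out of the expansion of $\theta_{1}$ at the cusp $0$ --- equivalently, of its $W_{4}$- and $W_{16}$-eigenvalues --- where the Gauss-sum factors $\Jac{-1}{\ell}$ and $\Jac{2}{\ell}$ enter; carrying these through the Hecke action, through the multiplication by $\eta$-products, and then matching the result against the structure of modular forms modulo $2^{\alpha}$ on the given level is where essentially all of the real work lies.
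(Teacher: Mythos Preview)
The theorem you are attempting to prove is not proved in this paper at all: it is quoted from Andersen \cite{Andersen} purely as motivation, and the paper gives no argument for it. The paper's own proof machinery in Sections~\ref{prelim} and~\ref{proof} is devoted entirely to Theorem~\ref{MainTheorem} for $\spt{n}$, not to $\overline{\mbox{\normalfont spt1}}(n)$. So there is no ``paper's own proof'' of this statement to compare your proposal against.

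That said, your outline is the correct template and is essentially how the paper treats $\spt{n}$ (and, one expects, how Andersen treats $\overline{\mbox{\normalfont spt1}}$): identify the relevant harmonic Maass form, observe that $T(\ell^{2})-(1+\ell)$ kills the nonholomorphic part because the shadow is a weight-$\tfrac12$ theta, clear poles with an $\eta$-power to land in a space of holomorphic forms of controlled weight, and then finish with Sturm's bound modulo $t$. Where your sketch remains genuinely incomplete is the $2$-adic step: you defer both the exact bound on the pole orders of $\mathcal{M}^{(1)}_{\ell}$ at all cusps (needed to get a single clearing $\eta$-product independent of $\ell$) and the determination of the exact power of $2$ in the coefficients of $F_{\ell}$ as a function of $\ell\bmod 8$. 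In the paper's treatment of $\spt{n}$ this step is \emph{not} handled by abstract Atkin--Lehner or Gauss-sum considerations as you suggest, but rather by expanding $\ell\,\Xi_{\ell}\eta\Delta^{s_{\ell}}$ against an explicit basis, reducing to congruences for specific integers $c_{j,\ell}$, and then proving those via binary quadratic forms and Shimura lifts (Propositions~\ref{prop1}--\ref{prop3}). Your proposal does not supply an $\overline{\mbox{\normalfont spt1}}$-analogue of that concrete arithmetic, and that is where the real content lies.
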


Similar to this, we find the power of $2$ in the modulus of (\ref{toBeImproved}) can be increased for certain values of $\ell$. Specifically, we will prove the following theorem.
\begin{theorem}\label{MainTheorem} Let $\ell\ge 5$ be prime, and define
\begin{align}
	\beta &:= 
		\left\{\begin{array}{lll}
			3 & \mbox{ if } & \ell \equiv 7,9 \pmod{24}  
			\\
			4 & \mbox{ if } & \ell \equiv 13,23 \pmod{24}  
			\\
			5 & \mbox{ if } & \ell \equiv 1,11,17,19 \pmod{24}.
		\end{array}\right.
\end{align}
Then for $n\ge 1$ we have
\begin{align}
&\spt{\ell^2n - s_\ell} + \chi_{12}(\ell)\Jac{1 - 24n}{\ell}\spt{n} + \ell\spt{\frac{n+s_\ell}{\ell^2}}
\nonumber\\
&\equiv \chi_{12}(\ell)(1+\ell)\spt{n} \pmod{2^\beta}.
\end{align}
\end{theorem}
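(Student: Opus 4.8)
The plan is to follow the method of Ono \cite{Ono3} and the first author \cite{Garvan1} — which already gives the congruence modulo $8$ — and to sharpen the $2$-adic estimates throughout. Write $\widehat{T}_\ell := T_{\chi_{12}}(\ell^2)-\chi_{12}(\ell)(1+\ell)$, so that $\mathcal{M}_\ell(z)=\alpha(24z)\mid\widehat{T}_\ell$, and recall from Theorem~\ref{TheoremModularM} that $g_\ell(z):=\mathcal{M}_\ell(z/24)\eta(z)^{\ell^2}$ is an entire modular form of weight $\tfrac12(\ell^2+3)$ on $\Gamma(1)$ with integral $q$-expansion. The first step is to isolate the $\spt$-contribution to $\mathbf{a}(n)=12\spt{n}+(24n-1)p(n)$: the elementary identity $\sum_{n\ge0}(24n-1)p(n)q^{n}=-E_2(q)\big/\prod_{m\ge1}(1-q^m)$, where $E_2=1-24\sum_{n\ge1}\sigma_1(n)q^n$, gives $\alpha(24z)=12\sum_{n\ge0}\spt{n}q^{24n-1}-E_2(24z)/\eta(24z)$. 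Applying $\widehat{T}_\ell$ and multiplying by $\eta(z)^{\ell^2}$ after the substitution $z\mapsto z/24$ yields $12\,H_\ell(z)=g_\ell(z)+h_\ell(z)$, where $h_\ell(z):=\eta(z)^{\ell^2}\cdot\bigl[(E_2(24z)/\eta(24z))\mid\widehat{T}_\ell\bigr]_{z\mapsto z/24}$ and, writing $S_\ell(n)$ for the left-hand side of the congruence in Theorem~\ref{MainTheorem}, $H_\ell(z)=q^{s_\ell}\bigl(\sum_{n\ge1}S_\ell(n)q^n\bigr)\prod_{m\ge1}(1-q^m)^{\ell^2}$. Since $\prod_{m\ge1}(1-q^m)^{\ell^2}$ is invertible in $\mathbb{Z}[[q]]$ and $\gcd(3,2)=1$, the assertion $S_\ell(n)\equiv0\pmod{2^{\beta}}$ for all $n\ge1$ is equivalent to
\[
 g_\ell(z)\equiv-\,h_\ell(z)\pmod{2^{\beta+2}}.
\]

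Next I would analyze $h_\ell$. Because $E_2(24z)/\eta(24z)$ is the holomorphic part of $E_2^*(24z)/\eta(24z)$ and Hecke operators preserve almost-holomorphic modular forms (while $\widehat{T}_\ell$ kills the Eichler-integral part of the completion of $\alpha$, by Theorem~\ref{TheoremModularM}), $h_\ell$ is a \emph{quasimodular} form of weight $\tfrac12(\ell^2+3)$ and depth $1$ on $\Gamma(1)$: explicitly $h_\ell=\phi_0+E_2\phi_1$ with $\phi_0,\phi_1$ genuine modular forms assembled from $\eta$, $\Delta(z)=\eta(z)^{24}$, and — via the known Hecke action on $E_2/\eta$ — from Eisenstein data in $\ell$. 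Using that $E_2$ is a $2$-adic modular form (so $E_2$, hence $h_\ell$, is congruent modulo $2^{\beta+2}$ to a classical modular form) together with $E_4\equiv1\pmod{16}$ and $E_6\equiv1\pmod8$, one rewrites both $g_\ell$ and $h_\ell$ modulo $2^{\beta+2}$ in terms of $\Delta$, $E_4$, $E_6$ and the modular invariant $j$, so the required congruence becomes a polynomial congruence in $j$ after dividing by a fixed $\eta$-quotient. Equivalently — and this is how one would actually verify it — one works with $\sum_{n\ge0}\spt{n}q^{24n-1}$ on the fixed group $\Gamma_0(576)$, on which $\widehat{T}_\ell$ preserves a fixed finite-dimensional ambient space; a Sturm-type bound for $\Gamma_0(576)$ then reduces the whole congruence, for each residue class of $\ell$ modulo $24$, to checking finitely many Fourier coefficients, the modulo-$8$ instance being known \cite{Garvan1}.

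The crux — and the expected main obstacle — is the last point: making the argument \emph{uniform in $\ell$} and pinning down the exact exponent $\beta$. The gain of a $2$-power beyond $8$ is governed by the $2$-adic valuations $v_2(1+\ell)$ and $v_2(\ell^2-1)=v_2(24\,s_\ell)$, by the value of $\chi_{12}(\ell)$, and by the Gauss/Legendre-symbol factors $\Jac{1-24n}{\ell}$ occurring in $\widehat{T}_\ell$ and in the Hecke action on $E_2/\eta$; these data partition the primes $\ell\ge5$ precisely according to $\ell\bmod24$, giving one extra factor of $2$ when $\ell\equiv13,23\pmod{24}$ and two extra factors when $\ell\equiv1,11,17,19\pmod{24}$. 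Tracking these valuations carefully through the Hecke action — and, where convenient, reducing the $T_{\chi_{12}}(\ell^2)$-statement to smaller powers of $\ell$ in the manner of Ahlgren--Bringmann--Lovejoy \cite{ABL} — so that the Sturm check is bounded independently of $\ell$ is the technical heart of the proof; the remainder is the modular-forms bookkeeping indicated above.
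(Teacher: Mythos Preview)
Your framework through the reduction step matches the paper: split $\mathbf{a}(n)=12\spt{n}+(24n-1)p(n)$, use that $\ell\mathcal{A}_\ell(z)\eta(z)\Delta(z)^{s_\ell}\in M_{(\ell^2+3)/2}(\Gamma(1))$, replace $E_2$ in $\ell\Xi_\ell\eta\Delta^{s_\ell}$ by a genuine modular form modulo $128$ (the paper uses $E_2\equiv E_4^7E_6+80\Delta+32\Delta^2+64\Delta^4\pmod{128}$), and compare in the basis $\{E_4^{3n-1}E_6\Delta^{s_\ell-n}\}$. Since $12\ell\mathcal{S}_\ell\eta\Delta^{s_\ell}$ has $q$-order $\ge s_\ell+1$, the basis terms are forced to cancel and one is left with
\[
12\ell\mathcal{S}_\ell\eta\Delta^{s_\ell}\equiv (32c_{1,\ell}+64c_{3,\ell})\Delta^{s_\ell+1}+64c_{2,\ell}\Delta^{s_\ell+2}+64c_{1,\ell}\Delta^{s_\ell+3}\pmod{128},
\]
where the $c_{n,\ell}$ are the coefficients of Ono's polynomial $C_\ell(x)$ with $Z_\ell\eta=C_\ell(j)$.

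The gap is in what comes next. Your assertion that the $\ell\bmod 24$ partition is ``governed by $v_2(1+\ell)$, $v_2(\ell^2-1)$, $\chi_{12}(\ell)$, and the Legendre-symbol factors'' is incorrect: one checks directly that no combination of these quantities separates $\{1,11,17,19\}$ from $\{13,23\}$ from the remaining classes (for instance $v_2(1+\ell)=1$ for both $\ell\equiv 1$ and $\ell\equiv 13$, and $\chi_{12}(\ell)=1$ for both as well). What must actually be shown is that $c_{1,\ell}$ is even for $\ell\equiv 1,11,13,17,19,23$, that $c_{2,\ell}$ is even for $\ell\equiv 1,5,11,17,19,23$, and that $c_{1,\ell}+2c_{3,\ell}\equiv 0\pmod 4$ for $\ell\equiv 1,11,17,19$. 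The first two reduce, via $\aqprod{q}{q}{\infty}\Delta^k\pmod 2$, to counting representations of $\ell^2$ by $x^2+6y^2$ and $x^2+3y^2$ with a congruence condition on $y$; these are settled by genus theory and class-group computations for discriminants $-24$, $-384$, $-12$, $-768$ (Sun--Williams). The third is handled by taking the Shimura lift of the \emph{fixed} forms $\eta(24z)\Delta(24z)$ and $\eta(24z)\Delta(24z)^3$ into $S_{24}(\Gamma_0(144))$ and $S_{72}(\Gamma_0(144))$, and then a single Sturm check on $\Gamma_0(576)$ independent of $\ell$. Your proposed route --- a Sturm bound applied directly to $\mathcal{S}_\ell$ on $\Gamma_0(576)$ --- cannot be made uniform as stated, since both the weight of $g_\ell$ and the operator $\widehat{T}_\ell$ vary with $\ell$; the uniformity comes precisely from the fact that the auxiliary forms in the quadratic-form and Shimura-lift arguments do not depend on $\ell$.
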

The case when $\ell\equiv 7,9\pmod{24}$ is already given by (\ref{toBeImproved}). If we let the generating function for the spt function be given by
\begin{align}
	\mbox{SPT}(z) := \sum_{n=1}^\infty \spt{n}q^{n-\frac{1}{24}},
\end{align}
then Theorem \ref{MainTheorem} is saying that $\mbox{SPT}(24z)$ an eigenform modulo $2^\beta$ for the Hecke operators $T_{\chi_{12}}(\ell^2)$ with eigenvalue $\chi_{12}(\ell)(1+\ell)$.

\section{Preliminaries}\label{prelim}
As in \cite{Garvan1}, we make use of the following definitions and theorems. For even integers $k\ge 2$ we denote by $E_{k}(z)$ the normalized Eisenstein series for the full modular group,
\begin{align}
	E_{k}(z) &:= 1 - \frac{2k}{B_{k}}\sum_{n=1}^\infty \sigma_{k-1}(n)q^n ,
\end{align}
where $q=\exp(2\pi iz)$, $B_{k}$ is the $k^{th}$ Bernoulli number, and $\sigma_{k-1}$ is the sum of the divisors function given by
\begin{align}
	\sigma_{k-1}(n) &:= \sum_{d\mid n} d^{k-1}.
\end{align}
For $k>2$, $E_k$ is a modular form of weight $k$ for the full modular group. In particular the first few Eisenstein series are given by
\begin{align}
	\label{E2Def}
	&E_2(z) = 1 - 24\sum_{n=1}^\infty \sigma_1(n)q^n, 
	\\\label{E4Def}
	&E_4(z) = 1 + 240\sum_{n=1}^\infty \sigma_3(n)q^n,
	\\\label{E6Def}
	&E_6(z) = 1 - 504\sum_{n=1}^\infty \sigma_5(n)q^n.
\end{align}

We also use Ramanujan's function $\Delta(z)$, 
\begin{align}
	\Delta(z) := \eta(z)^{24} = q\prod_{n=1}^\infty(1-q^n)^{24},
\end{align}
and Klein's modular invariant
\begin{align}
	j(z) := \frac{E_4(z)^3}{\Delta(z)} = q^{-1} + 744 + 196884q + \dots.
\end{align}

We make use the standard infinite product notation
\begin{align}
	\aqprod{q}{q}{\infty} := \prod_{n=1}^\infty(1-q^n) = q^{-\frac{1}{24}}\eta(z).
\end{align}

By $M_{k}(\Gamma,\chi)$ we mean the finite dimensional vector space of holomorphic modular forms of weight $k$ on a congruence subgroup $\Gamma$ of $SL(2,\mathbb{Z})$ with character $\chi$. By $S_{k}(\Gamma,\chi)$ we mean the subspace of cusp forms of $M_{k}(\Gamma,\chi)$. When $\chi$ is the trivial character, we instead write $M_{k}(\Gamma)$ and $S_{k}(\Gamma)$.

Next we define the series with which we will work. We define 
\begin{align}
	d(n) := (24n - 1)p(n),
\end{align}
so that
\begin{align}
	\sum_{n=0}^\infty d(n)q^{24n-1} &= q\frac{d}{dq}\frac{1}{\eta(24z)}
	= -\frac{E_2(24z)}{\eta(24z)},
	\\
	\mathbf{a}(n) &= 12\spt{n} + d(n).
\end{align}
For $\ell \ge 5$ prime we define
{\allowdisplaybreaks
\begin{align}
	\label{DefOfZ}
	&Z_\ell(z) :=
	\sum_{n=-s_\ell}^\infty{\textstyle 
		\Parans{	\ell^3p(\ell^2n - s_\ell) 
					+ \ell\chi_{12}(\ell)\Jac{1-24n}{\ell}p(n) 
					+ p\Parans{\frac{n+s_\ell}{\ell^2}}
		}
		q^{n - \frac{1}{24}}},
	\\
	&\Xi_\ell(z) := 
	\sum_{n=-s_\ell}^\infty{\textstyle
		\Parans{	d(\ell^2n - s_\ell) 
				+ \chi_{12}(\ell)\Parans{\Jac{1-24n}{\ell}-1-\ell}d(n) 
				+ \ell d\Parans{\frac{n+s_\ell}{\ell^2}}
		}
		q^{n - \frac{1}{24}}},
	\\
	&\mathcal{A}_\ell(z) :=
	\sum_{n=-s_\ell}^\infty{\textstyle
		\Parans{	\mathbf{a}(\ell^2n - s_\ell) 
			+ \chi_{12}(\ell)\Parans{\Jac{1-24n}{\ell}-1-\ell}\mathbf{a}(n) 
			+ \ell\mathbf{a}\Parans{\frac{n + s_\ell}{\ell^2}}
		}
		q^{n- \frac{1}{24}}},
	\\
	&\mathcal{S}_\ell(z) := 
	\nonumber\\\label{defOfS}
	&\sum_{n=1}^\infty{\textstyle
		\Parans{	\spt{\ell^2n - s_\ell} 
			+ \chi_{12}(\ell)\Parans{\Jac{1-24n}{\ell}-1-\ell}\spt{n} 
			+ \ell\spt{\frac{n + s_\ell}{\ell^2}}
		}
		q^{n- \frac{1}{24}}},
\end{align}
where $s_\ell$ is given in (\ref{DefOfSl}). We note that
\begin{align}\label{AByXiAndS}
\mathcal{A}_\ell(z) = 12\mathcal{S}_\ell(z) + \Xi_\ell(z) = \mathcal{M}_\ell(z/24).
\end{align}

By (\ref{defOfS}) we see to prove Theorem \ref{MainTheorem} we can instead show 
\begin{align}
	\mathcal{S}_\ell(z)&\equiv 0\pmod{16}
\end{align}
for prime $\ell\equiv 13,23\pmod{24}$ and 
\begin{align}
	\mathcal{S}_\ell(z)\equiv 0\pmod{32}
\end{align}
for prime $\ell\equiv 1,11,17,19\pmod{24}$.

We need some properties of the series we've defined. By Theorem \ref{TheoremModularM} and (\ref{AByXiAndS}) we see that the function
\begin{align}\label{preliminariesEq1}
	\ell\mathcal{A}_\ell(z)\eta(z)\Delta(z)^{s_\ell}
	&\in M_{\frac{\ell^2+3}{2}}(\Gamma(1)).
\end{align}
Since $\frac{\ell^2+3}{2} = 2+12s_\ell$ we have the dimension of $M_{\frac{\ell^2+3}{2}}(\Gamma(1))$ is $s_\ell$ and the set
\begin{align}\label{preliminariesEq2}
	\CBrackets{E_4(z)^{3n-1}E_6(z)\Delta(z)^{s_\ell-n} : 1 \le n \le s_\ell}
\end{align}
is a basis. By (\ref{preliminariesEq1}) and (\ref{preliminariesEq2}) we see there exist integers $b_{n,\ell}$, for $1 \le n \le s_\ell$, such that
\begin{align}\label{AByIntegers}
	\ell\mathcal{A}_\ell(z)\eta(z)\Delta(z)^{s_\ell}
	&=
	\sum_{n=1}^{s_\ell}b_{n,\ell}E_4(z)^{3n-1}E_6(z)\Delta(z)^{s_\ell-n}.
\end{align}

Next we consider the following result of Atkin.
\begin{theorem}\label{AtkinTheorem}$\InTheoremCite{Atkin}{Atkin}$The function $Z_\ell(z)\eta(z)$ is a modular function on the full modular
group $\Gamma(1)$.
\end{theorem}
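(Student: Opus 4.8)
\medskip
\noindent
The plan is to exhibit $Z_\ell(z)\eta(z)$ as a constant multiple of a rescaled weight-zero weakly holomorphic form and then to force its level all the way down to $\Gamma(1)$ by checking the two generators of $\mathrm{SL}(2,\mathbb{Z})$. First I would identify $Z_\ell$ as a Hecke image. Put $F(z):=1/\eta(24z)=\sum_{n\ge 0}p(n)q^{24n-1}$, a weakly holomorphic modular form of weight $-\tfrac12$ on $\Gamma_0(576)$ with character $\chi_{12}$, and recall the companion theta series $\eta(24z)=\sum_{n\ge1}\chi_{12}(n)q^{n^2}\in M_{1/2}(\Gamma_0(576),\chi_{12})$. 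Using the arithmetic identities $24(\ell^2 n-s_\ell)-1=\ell^2(24n-1)$ and $24\cdot\tfrac{n+s_\ell}{\ell^2}-1=\tfrac{24n-1}{\ell^2}$, together with the action of the weight-$(-\tfrac12)$ Hecke operator $T_{\chi_{12}}(\ell^2)$ on $\sum b(n)q^n$, namely $\sum\bigl(b(\ell^2 n)+\chi_{12}(\ell)\Jac{-n}{\ell}\ell^{-2}b(n)+\ell^{-3}b(n/\ell^2)\bigr)q^n$, a comparison of coefficients gives $Z_\ell(z)=\ell^{3}\bigl(F\mid T_{\chi_{12}}(\ell^2)\bigr)(z/24)$, so that
\[
	Z_\ell(z)\eta(z)=\ell^{3}\,\Psi(z/24),\qquad
	\Psi(z):=\bigl(F\mid T_{\chi_{12}}(\ell^2)\bigr)(z)\cdot\eta(24z).
\]
Since $\ell\ge5$ is coprime to $576$, the standard theory of half-integral weight Hecke operators (which applies equally to weakly holomorphic forms) keeps $F\mid T_{\chi_{12}}(\ell^2)$ weakly holomorphic of weight $-\tfrac12$ on $\Gamma_0(576)$ with character $\chi_{12}$; hence $\Psi$ is a weakly holomorphic weight-$0$ modular function on $\Gamma_0(576)$. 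The content of the theorem is that although the rescaling $z\mapsto z/24$ would naively leave only a small congruence subgroup, $Z_\ell(z)\eta(z)$ is in fact modular on all of $\Gamma(1)$.

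Next I would deal with the generator $z\mapsto z+1$ and the cusp. Because $\ell^2\equiv1\pmod{24}$, the operator $T_{\chi_{12}}(\ell^2)$ carries a series supported on exponents $\equiv-1\pmod{24}$ to another such series, so $\Psi$ is supported on exponents $\equiv0\pmod{24}$ and $Z_\ell(z)\eta(z)=\ell^{3}\Psi(z/24)$ has an ordinary integer-exponent $q$-expansion; equivalently, $Z_\ell(z+1)=e^{-2\pi i/24}Z_\ell(z)$ exactly cancels $\eta(z+1)=e^{2\pi i/24}\eta(z)$, so $Z_\ell(z)\eta(z)$ is invariant under $z\mapsto z+1$. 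It is moreover holomorphic on $\mathbb{H}$, and at $\infty$ the pole of order $1$ of $F$, the $\ell^{-3}b(n/\ell^2)$ term of $T_{\chi_{12}}(\ell^2)$, and the factor $\eta(24z)=O(q)$ combine so that $\Psi$ has a pole of order $\ell^2-1=24s_\ell$; hence $Z_\ell(z)\eta(z)$ has a pole of order $s_\ell$ at the single cusp of $\Gamma(1)$.

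The essential step is the generator $z\mapsto -1/z$. After the rescaling $z\mapsto z/24$, the Fricke involution $W_{576}$ of $\Gamma_0(576)$ becomes $z\mapsto-1/z$; from $\eta(-1/(24z))=\sqrt{-24iz}\,\eta(24z)$ the theta series $\eta(24z)$, and therefore $F=1/\eta(24z)$, is an eigenfunction of $W_{576}$, while for $\gcd(\ell,576)=1$ the Hecke operator $T_{\chi_{12}}(\ell^2)$ commutes with $W_{576}$. Consequently $\Psi\mid W_{576}=c\,\Psi$ for some nonzero constant $c$, which unwinds to $Z_\ell(-1/z)\eta(-1/z)=c\,Z_\ell(z)\eta(z)$ (the weight-$(-\tfrac12)$ transformation of $Z_\ell$ absorbing the factor in $\eta(-1/z)=\sqrt{-iz}\,\eta(z)$). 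Applying the weight-$0$ slash action to the identity $(ST)^3=S^2$ (both equal $-I$) in $\mathrm{SL}(2,\mathbb{Z})$, where $S$ multiplies $Z_\ell(z)\eta(z)$ by $c$ and $T$ fixes it, forces $c^{3}=c^{2}$, hence $c=1$. Since $S$ and $T$ generate $\mathrm{SL}(2,\mathbb{Z})$, $Z_\ell(z)\eta(z)$ is $\Gamma(1)$-invariant; being holomorphic on $\mathbb{H}$ and meromorphic at the cusp, it is a rational function of $j(z)$, and in particular a modular function on $\Gamma(1)$.

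The main obstacle is the half-integral weight bookkeeping hidden in the previous paragraph: setting up $T_{\chi_{12}}(\ell^2)$ in weight $-\tfrac12$ on weakly holomorphic forms of level $576$, and verifying both that $\eta(24z)$ (equivalently $1/\eta(24z)$) is a $W_{576}$-eigenform and that $T_{\chi_{12}}(\ell^2)$ commutes with $W_{576}$, all with the correct multiplier systems. This is exactly what Shimura's (and Kohnen's) machinery provides; Atkin's original treatment instead performs the equivalent computations by hand, from the transformation formula of $\eta$ and the decomposition of $T(\ell^2)$ into the operators $U_\ell$, $V_\ell$ and a quadratic twist. Once that input is in hand, the descent from $\Gamma_0(576)$ to $\Gamma(1)$ is forced purely by the group relations in $\mathrm{SL}(2,\mathbb{Z})$, as above.
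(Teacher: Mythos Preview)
The paper does not give its own proof of this statement; Theorem~\ref{AtkinTheorem} is simply quoted from Atkin's paper with a citation, and is used as input for the subsequent arguments. So there is no proof in this paper to compare against.

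That said, your outline is a legitimate proof. The identification $Z_\ell(z)=\ell^{3}\bigl(F\mid T_{\chi_{12}}(\ell^{2})\bigr)(z/24)$ with $F=1/\eta(24z)$ is correct (the arithmetic $24(\ell^{2}n-s_\ell)-1=\ell^{2}(24n-1)$ etc.\ lines up exactly with the weight $-\tfrac12$ Hecke formula), and the reduction to checking the two generators of $\Gamma(1)$ is valid. Translation invariance is immediate from the $q$-expansion, and for $S$ you correctly observe that after rescaling the inversion $z\mapsto-1/z$ becomes the Fricke involution $W_{576}$ on $\Psi$. Since $\eta(24z)$ (hence $F$) is a $W_{576}$-eigenfunction and Shimura's half-integral-weight Hecke operators commute with $W_{576}$ for $\gcd(\ell,576)=1$, the product $\Psi$ is a $W_{576}$-eigenfunction, and the relation $(ST)^{3}=S^{2}=-I$ in $\mathrm{PSL}(2,\mathbb{Z})$ forces the eigenvalue to be $1$. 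The only genuine input, as you flag, is the Hecke--Fricke commutation in half-integral weight; this is an operator identity coming from the double-coset description, so it holds for weakly holomorphic forms just as for holomorphic ones.

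For context, Atkin's original argument predates Shimura's formalism: he works directly with the decomposition of the Hecke action into the pieces $U_\ell$, $V_\ell$, and a quadratic twist, and verifies the $\Gamma(1)$-transformation law by hand from the explicit transformation formula of $\eta$. Your approach is a modern repackaging of exactly the same computation---the ``$T(\ell^{2})$ commutes with $W_{576}$'' statement encodes precisely the $\eta$-identities Atkin writes out---with the advantage that the descent from level $576$ to level $1$ becomes essentially formal once that commutation is in hand.
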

By Theorem \ref{AtkinTheorem} we know $Z_\ell(z)\eta(z)$ is a polynomial in $j(z)$. In \cite{Ono2} Ono determines this polynomial by defining a sequence of polynomials $A_m(x) \in \mathbb{Z}\SBrackets{x}$, each of degree $m$, by
\begin{align}
\sum_{m=0}^\infty A_m(x)q^m &= 
\aqprod{q}{q}{\infty} \frac{E_4(z)^2E_6(z)}{\Delta(z)}
			\frac{1}{j(z)-x}
\\
&= 1 + (x - 745)q + (x^2 - 1489x + 160511)q^2 +\dots.
\end{align} 
He then proves the following theorem.
\begin{theorem}$\InTheoremCite{Ono}{Ono2}$ For $\ell \ge 5$ prime
\begin{align}
Z_\ell(z)\eta(z) = \ell\chi_{12}(\ell) + A_{s_\ell}(j(z)),
\end{align}
where $Z_\ell(z)$ is given in (\ref{DefOfZ}), and $s_\ell$ is given in (\ref{DefOfSl}). 
\end{theorem}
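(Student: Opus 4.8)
The plan is to recognize both sides of the claimed identity as weakly holomorphic modular functions on $\Gamma(1)$ that are holomorphic on the upper half-plane $\mathbb{H}$, and then to force the identity by comparing $q$-expansions at the cusp only through the constant term. By Atkin's Theorem~\ref{AtkinTheorem}, $Z_\ell(z)\eta(z)$ is a modular function on $\Gamma(1)$; it is moreover holomorphic on $\mathbb{H}$, since the defining $q$-series of $Z_\ell(z)$ converges there and $\eta(z)$ is holomorphic on $\mathbb{H}$. Hence $Z_\ell(z)\eta(z)$, like $\ell\chi_{12}(\ell) + A_{s_\ell}(j(z))$, is a polynomial in $j(z)$, and two polynomials in $j(z)$ that agree in principal part and constant term at $\infty$ are equal, since their difference is a polynomial in $j(z)$ which is $O(q)$ and hence $0$.

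First I would compute the principal part and constant term of $Z_\ell(z)\eta(z)$ from the definition~(\ref{DefOfZ}). Write $Z_\ell(z) = \sum_{n\ge -s_\ell}c_\ell(n)q^{n-1/24}$ and $\eta(z) = q^{1/24}\aqprod{q}{q}{\infty}$, and put $\aqprod{q}{q}{\infty} = \sum_{k\ge 0}e(k)q^k$ (so the $e(k)$ are Euler's pentagonal coefficients). Then the coefficient of $q^m$ in $Z_\ell(z)\eta(z)$ for $m\le 0$ is $\sum_{n=-s_\ell}^{m}c_\ell(n)e(m-n)$. For $-s_\ell\le n\le 0$ one has $p(\ell^2 n - s_\ell) = 0$, $p(n) = 0$ unless $n = 0$, and $p\bigl((n+s_\ell)/\ell^2\bigr) = 0$ unless $n = -s_\ell$ (because $0 \le n+s_\ell \le s_\ell < \ell^2$); hence $c_\ell(-s_\ell) = p(0) = 1$, $c_\ell(0) = \ell\chi_{12}(\ell)\Jac{1}{\ell}p(0) = \ell\chi_{12}(\ell)$, and $c_\ell(n) = 0$ for $-s_\ell < n < 0$. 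It follows that $Z_\ell(z)\eta(z)$ has a pole of order exactly $s_\ell$ at $\infty$, with principal part $\sum_{k=0}^{s_\ell-1}e(k)q^{k-s_\ell}$ and constant term $e(s_\ell) + \ell\chi_{12}(\ell)$.

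Second I would extract the same data from the generating function of the $A_m$. Dividing the defining identity by $\aqprod{q}{q}{\infty}$ gives $\sum_m A_m(x)q^m = \aqprod{q}{q}{\infty}\sum_m J_m(x)q^m$, where the polynomials $J_m(x)$ are defined by $\dfrac{E_4(z)^2 E_6(z)}{\Delta(z)(j(z)-x)} = \sum_{m\ge 0}J_m(x)q^m$; by the Asai--Kaneko--Ninomiya denominator identity (equivalently, since $E_4(z)^2E_6(z)/\Delta(z) = -\tfrac{1}{2\pi i}\,dj/dz$) the function $J_m(j(z))$ is the unique modular function on $\Gamma(1)$ with $q$-expansion $q^{-m}+O(q)$. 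Consequently $A_{s_\ell}(j(z)) = \sum_{k=0}^{s_\ell}e(k)J_{s_\ell-k}(j(z))$ has principal part $\sum_{k=0}^{s_\ell-1}e(k)q^{k-s_\ell}$ and constant term $e(s_\ell)$. Comparing with the previous step, $Z_\ell(z)\eta(z) - A_{s_\ell}(j(z)) - \ell\chi_{12}(\ell)$ has vanishing principal part and vanishing constant term, so it is $O(q)$; being a polynomial in $j(z)$ that is $O(q)$, it is identically $0$, which is the claim.

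I expect the main obstacle to be the second step: pinning down the principal part of $A_{s_\ell}(j(z))$, which relies on the Faber-polynomial / Asai--Kaneko--Ninomiya description of the coefficients of $E_4(z)^2E_6(z)/\bigl(\Delta(z)(j(z)-x)\bigr)$ and on keeping careful track of the bookkeeping once the factor $\aqprod{q}{q}{\infty}$ is multiplied back in. By comparison the first step --- where the three superficially unrelated terms of $Z_\ell(z)$ conspire at the nonpositive powers of $q$ to give exactly a truncation of $\aqprod{q}{q}{\infty}$ together with the extra additive constant $\ell\chi_{12}(\ell)$ --- is then routine.
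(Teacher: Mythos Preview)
The paper does not prove this statement itself; it is quoted as a result of Ono \cite{Ono2} and used as input for the remainder of Section~\ref{prelim}. So there is no argument in the present paper to compare against.

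That said, your proof is correct. Both steps go through as written. In the first step the only nonzero contributions to $c_\ell(n)$ for $-s_\ell\le n\le 0$ are indeed $c_\ell(-s_\ell)=p(0)=1$ from the third summand of (\ref{DefOfZ}) and $c_\ell(0)=\ell\chi_{12}(\ell)\Jac{1}{\ell}p(0)=\ell\chi_{12}(\ell)$ from the middle summand, so multiplying by $\eta(z)=q^{1/24}\aqprod{q}{q}{\infty}$ gives principal part $\sum_{k=0}^{s_\ell-1}e(k)q^{k-s_\ell}$ and constant term $e(s_\ell)+\ell\chi_{12}(\ell)$. In the second step the Asai--Kaneko--Ninomiya / Faber-polynomial identity (equivalently $E_4^2E_6/\Delta=-q\,dj/dq$) gives $J_m(j(z))=q^{-m}+O(q)$ for $m\ge 1$ and $J_0=1$, whence $A_{s_\ell}(j(z))=\sum_{k=0}^{s_\ell}e(k)J_{s_\ell-k}(j(z))$ has the same principal part and constant term $e(s_\ell)$. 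The difference $Z_\ell(z)\eta(z)-A_{s_\ell}(j(z))-\ell\chi_{12}(\ell)$ is then a polynomial in $j$ that is $O(q)$, hence identically zero. This is the natural Faber-polynomial route and is in the same spirit as Ono's original argument.
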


We define a sequence of polynomials $C_\ell(x) \in \mathbb{Z}\SBrackets{x}$ by
\begin{align}
C_\ell(x) &:= \ell\chi_{12}(\ell) + A_{s_\ell}(x)
= \sum_{n=0}^{s_\ell}c_{n,\ell}x^n,
\end{align}
so that $Z_\ell(z)\eta(z) = C_\ell(j(z)).$ 

The proof of Theorem \ref{MainTheorem} will start by handling $S_\ell(z)$ through $A_\ell(z)$ and $\Xi_\ell(z)$. We have the following theorem for $\Xi_\ell(z)$.
\begin{theorem}\InTheoremCite{Garvan}{Garvan1} For $\ell\ge 5$ we have
\begin{align}
	\ell\Xi(z)\eta(z)\Delta(z)^{s_\ell}
	=&
	-\sum_{n=0}^{s_\ell}
	 	c_{n,\ell}E_4(z)^{3n-1}\Delta(z)^{s_\ell-n}(24nE_6(z)+E_4(z)E_2(z))	
	\nonumber\\
	&+\chi_{12}(\ell)(1+\ell)E_2(z)\Delta(z)^{s_\ell}.
\end{align}
\end{theorem}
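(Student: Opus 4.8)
The plan is to recognize $\Xi_\ell(z)$ as the image of $Z_\ell(z)$ under the operator $\theta := q\frac{d}{dq}$, up to an explicit Eisenstein term, and then to feed in the closed form $Z_\ell(z)\eta(z) = C_\ell(j(z))$ together with the classical derivative formulas. Here $\theta$ acts on the fractional $q$-expansions used throughout by multiplying the coefficient of $q^{n-\frac1{24}}$ by $n-\frac1{24}$, equivalently by $\frac{24n-1}{24}$; since $\frac1{\eta(z)} = \sum_{n\ge 0}p(n)q^{n-\frac1{24}}$ and $\theta\eta = \frac1{24}E_2\eta$, this gives $\sum_{n}d(n)q^{n-\frac1{24}} = 24\,\theta\Parans{\tfrac1{\eta(z)}} = -\frac{E_2(z)}{\eta(z)}$, which is the identity for $\sum_n d(n)q^{24n-1}$ from Section~\ref{prelim} rewritten in the present normalization.

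First I would pass the factor $24n-1$ through the three partition arguments occurring in $Z_\ell(z)$. From $d(n) = (24n-1)p(n)$ and $24 s_\ell+1 = \ell^2$ one checks directly that
\begin{align*}
 d(\ell^2 n - s_\ell) &= \ell^2(24n-1)\,p(\ell^2 n - s_\ell),\\
 d\Parans{\tfrac{n+s_\ell}{\ell^2}} &= \tfrac{24n-1}{\ell^2}\,p\Parans{\tfrac{n+s_\ell}{\ell^2}},
\end{align*}
while $\Jac{1-24n}{\ell}d(n) = (24n-1)\Jac{1-24n}{\ell}p(n)$. Applying $24\theta$ to the series $Z_\ell(z)$ of (\ref{DefOfZ}) multiplies its $q^{n-\frac1{24}}$-coefficient by $24n-1$; comparing the three resulting contributions against the coefficients $\ell^3$, $\ell\chi_{12}(\ell)\Jac{1-24n}{\ell}$, $1$ there, one finds that they assemble into $\ell$ times the coefficient combination of $\Xi_\ell(z)$ but with $\Jac{1-24n}{\ell}$ in place of $\Jac{1-24n}{\ell}-1-\ell$. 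Since these two combinations differ by $\ell\chi_{12}(\ell)(1+\ell)d(n)$ and $\sum_n d(n)q^{n-\frac1{24}} = -E_2(z)/\eta(z)$, this yields the clean identity
\begin{align*}
 \Xi_\ell(z) = \frac{24}{\ell}\,\theta\Parans{Z_\ell(z)} + \chi_{12}(\ell)(1+\ell)\,\frac{E_2(z)}{\eta(z)}.
\end{align*}

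Next I would substitute $Z_\ell(z) = C_\ell(j(z))/\eta(z)$ and differentiate. Using $\theta\eta = \frac1{24}E_2\eta$ and $\theta j = -E_4^2E_6/\Delta$ (a consequence of $\theta E_4 = \frac13(E_2E_4-E_6)$ and $\theta\Delta = E_2\Delta$), this gives
\begin{align*}
 \theta\Parans{Z_\ell(z)} = -\frac{C_\ell'(j)\,E_4^2E_6}{\eta\,\Delta} - \frac{C_\ell(j)\,E_2}{24\,\eta}.
\end{align*}
Multiplying the clean identity through by $\ell\,\eta(z)\Delta(z)^{s_\ell}$, writing $C_\ell(x) = \sum_{n=0}^{s_\ell}c_{n,\ell}x^n$, and substituting $j = E_4^3/\Delta$, the monomial $c_{n,\ell}j^n$ contributes $24n\,c_{n,\ell}E_4^{3n-1}E_6\Delta^{s_\ell-n}$ from the first term above and $c_{n,\ell}E_4^{3n}\Delta^{s_\ell-n}E_2$ from the second; summing over $n$ these recombine into $\sum_{n}c_{n,\ell}E_4^{3n-1}\Delta^{s_\ell-n}\Parans{24nE_6+E_4E_2}$ (the factor $E_4$ in $E_4E_2$ absorbing the apparent pole at $n=0$), while the $E_2/\eta$ term supplies the Eisenstein summand $E_2\Delta^{s_\ell}$. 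Keeping track of the overall signs completes the identity.

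The main obstacle is the combinatorial bookkeeping in the second paragraph: one must confirm that the three differently weighted arithmetic combinations collapse to a single scalar multiple of $\theta(Z_\ell(z))$, which relies exactly on the normalization $24 s_\ell+1 = \ell^2$ and on $\theta$ being applied termwise to the fractional $q$-expansions. The cleanest way to see this is to compare the coefficient of $q^{n-\frac1{24}}$ on both sides for an arbitrary $n\ge -s_\ell$ before clearing the factor $\eta(z)\Delta(z)^{s_\ell}$. Once the clean identity is in hand, the remainder is the routine polynomial manipulation with $j = E_4^3/\Delta$ and the standard derivative formulas for $E_4$, $E_6$, $\Delta$, $\eta$, and $j$; no input beyond Ono's closed form for $Z_\ell(z)\eta(z)$ is required.
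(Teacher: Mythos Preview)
Your argument is correct and is exactly the natural route: express $\Xi_\ell$ as $\tfrac{24}{\ell}\theta(Z_\ell)$ plus the explicit Eisenstein correction coming from the $-(1+\ell)$ shift, then plug in $Z_\ell(z)=C_\ell(j(z))/\eta(z)$ and differentiate using $\theta j=-E_4^2E_6/\Delta$ and $\theta\eta=\tfrac1{24}E_2\eta$. The coefficient bookkeeping you describe checks out termwise, and the $n=0$ term is indeed well-defined since the $24nE_6$ part vanishes and $E_4^{-1}\cdot E_4E_2=E_2$.

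Note that the present paper does not prove this identity at all; it is quoted from \cite{Garvan1}, so there is no in-text argument to compare against. Your derivation is the standard one and is presumably close in spirit to the original. One point worth flagging: your computation produces the Eisenstein term $\ell\,\chi_{12}(\ell)(1+\ell)E_2\Delta^{s_\ell}$, whereas the theorem as stated here has only $\chi_{12}(\ell)(1+\ell)E_2\Delta^{s_\ell}$. This is a typo in the displayed statement; the paper itself uses the version with the extra factor of $\ell$ in equation~(\ref{XiMess}), which is what your argument yields.
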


\section{The Proof of Theorem \ref{MainTheorem}}\label{proof}
To start we claim the following congruences hold 
\begin{align}
	&E_4(z)^8 \equiv 1 \pmod{128},
	\\\label{E2Congruence}
	&E_2(x) \equiv E_4(z)^7 E_6(z) + 80\Delta(z) + 32\Delta(z)^2 
				+ 64\Delta(z)^4 \pmod{128}.
\end{align}
The first congruence is clear. The congruence in (\ref{E2Congruence}) is realized by finding both sides to be congruent to modular forms of weight $258$ and using Sturm's theorem. That the right hand side is congruent to a weight $258$ modular form on $\Gamma(1)$ is immediate. For the left hand side we use the modular form $-\sum_{n=0}^6 2^{n}E_{258}(2^nz)$, which is a weight $258$ modular form on $\Gamma_0(64)$. The power of $q$ specified by Sturm's theorem is $2064$ and the verification is done in Maple.

We can use this to compute $\ell\Xi_\ell(z)\eta(z)\Delta(z)^{s_\ell} \pmod{128}$. We also use the following congruences which follow immediately from (\ref{E2Def}), (\ref{E4Def}), and (\ref{E6Def}), 
\begin{align}
	&80\equiv 80E_2(z)\equiv 80E_4(z) \equiv 80 E_6(z) \pmod{128},
	\\
	&32\equiv 32E_2(z) \equiv 32E_4(z) \equiv 32 E_6(z) \pmod{128},
	\\
	&64\equiv 64E_2(z) \equiv 64E_4(z) \equiv 64 E_6(z) \pmod{128}. 
\end{align}
For brevity we suppress the dependence on $z$. We have
\begin{align}\label{XiMess}
\ell&\Xi_\ell(z)\eta(z)\Delta(z)^{s_\ell}
\nonumber\\
=&
	-\sum_{n=0}^{s_\ell}
		c_{n,\ell}E_4^{3n-1}\Delta^{s_\ell-n}(24nE_6+E_4E_2)
	+\chi_{12}(\ell)\ell(1+\ell)E_2\Delta^{s_\ell}
\nonumber\\
\equiv&
	-\sum_{n=1}^{s_\ell}
		c_{n,\ell}E_4^{3n-1}\Delta^{s_\ell-n}
		(24nE_6 + 4E_6 + 80E_4\Delta + 32E_4\Delta^2 + 64E_4\Delta^4)
	\nonumber\\
	&+(\chi_{12}(\ell)\ell(1+\ell)-c_{0,\ell})E_2\Delta^{s_\ell}
	\pmod{128}
\nonumber\\
\equiv&
	-\sum_{n=1}^{s_\ell}
		(24n+1)c_{n,\ell}E_4^{3n-1}E_6\Delta^{s_\ell-n}
	-\sum_{n=0}^{s_\ell-1}
		80c_{n+1,\ell}E_4^{3n+3}\Delta^{s_\ell-n}
	\nonumber\\
	&-\sum_{n=-1}^{s_\ell-2}
		32c_{n+2,\ell}E_4^{3n+6}\Delta^{s_\ell-n}
	-\sum_{n=-3}^{s_\ell-4}
		64c_{n+4,\ell}E_4^{3n+12}\Delta^{s_\ell-n}
	\nonumber\\
	&+(\chi_{12}(\ell)\ell(1+\ell)-c_{0,\ell})E_2\Delta^{s_\ell}	
	\pmod{128}
\nonumber\\
\equiv&
	\sum_{n=1}^{s_\ell}a_{n,\ell}E_4^{3n-1}E_6\Delta^{s_\ell-n}	
	+ a_{0,\ell}E_2\Delta^{s_\ell}
	\nonumber\\
	&-(32c_{1,\ell}+64c_{3,\ell})\Delta^{s_\ell+1}
	-64c_{2,\ell}\Delta^{s_\ell+2}
	-64c_{1,\ell}\Delta^{s_\ell+3}
	\pmod{128},
\end{align}
where the $a_{n,\ell}$ are integers.

Thus from the (\ref{XiMess}), (\ref{AByXiAndS}), and (\ref{AByIntegers}) we have
\begin{align}\label{SMess}
12&\ell S_\ell(z)\eta(z)\Delta(z)^{s_\ell}
\nonumber\\
\equiv&
	\sum_{n=1}^{s_\ell}(b_{n,\ell}-a_{n,\ell})E_4(z)^{3n-1}E_6(z)\Delta(z)^{s_\ell-n}
	-a_{0,\ell}E_2(z)\Delta(z)^{s_\ell}
	\nonumber\\	
	&+(32c_{1,\ell}+64c_{3,\ell})\Delta(z)^{s_\ell+1}
	+64c_{2,\ell}\Delta(z)^{s_\ell+2}
	+64c_{1,\ell}\Delta(z)^{s_\ell+3}
	\pmod{128}.
\end{align}

However, in $12\ell S_\ell(z)\eta(z)\Delta(z)^{s_\ell}$ the lowest possible power of $q$ is $s_\ell+1$, whereas
\begin{align}
	E_4^{3n-1}(z)E_6(z)\Delta(z)^{s_\ell-n} &= q^{s_\ell-n}+\dots,
	\\
	E_2\Delta^{s_\ell} &= q^{s_\ell}+\dots.
\end{align}
Thus the congruence in (\ref{SMess}) requires every term with $\Delta(z)^n$, for $0\le n\le s_\ell$, to vanish modulo $128$. Hence we have
\begin{align}
	&12\ell S_\ell(z)\eta(z)\Delta(z)^{s_\ell}
	\nonumber\\\label{congruenceForS}	
	&\equiv 
		(32c_{1,\ell}+64c_{3,\ell})\Delta^{s_\ell+1}(z)
		+64c_{2,\ell}\Delta^{s_\ell+2}(z)
		+64c_{1,\ell}\Delta^{s_\ell+3}(z)
	\pmod{128}.
\end{align}

We now must determine congruences satisfied by $c_{1,\ell},c_{2,\ell},$ and $c_{3,\ell}$. We recall the $c_{n,\ell}$ are given by
\begin{align}\label{EqForC}
	\sum_{n=0}^{s_\ell}c_{n,\ell}x^n &= \ell\chi_{12}(\ell)+A_{s_\ell}(x),
\end{align}
where
\begin{align}\label{DefinitionOfA}
	\sum_{n=0}A_m(x)q^m 
		&= \aqprod{q}{q}{\infty} \frac{E_4(z)^2E_6(z)}{\Delta(z)}
			\frac{1}{j(z)-x}.
\end{align}
In particular, $c_{1,\ell}$ is the coefficient of $xq^{s_\ell}$, $c_{2,\ell}$ is the coefficient of $x^2q^{s_\ell}$, and $c_{3,\ell}$ is the coefficient of $x^3q^{s_\ell}$ in (\ref{DefinitionOfA}).

\begin{proposition}\label{prop1}
If $\ell\ge 5$ is prime and $\ell\equiv 1,11,13,17,19,23\pmod{24}$, then $c_{1,\ell}\equiv 0\pmod{2}$.
\end{proposition}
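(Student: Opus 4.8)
The plan is to extract $c_{1,\ell} \pmod 2$ directly from the generating function~\eqref{DefinitionOfA} by reducing everything modulo $2$. First I would note that, since $c_{1,\ell}$ is the coefficient of $x q^{s_\ell}$ in
\begin{align*}
	\aqprod{q}{q}{\infty} \frac{E_4(z)^2E_6(z)}{\Delta(z)} \frac{1}{j(z)-x},
\end{align*}
I should expand $\frac{1}{j(z)-x} = \sum_{k\ge 0} x^k j(z)^{-k-1}$ and pick out the $k=1$ term, so that $c_{1,\ell}$ is the coefficient of $q^{s_\ell}$ in $\aqprod{q}{q}{\infty}\, E_4(z)^2 E_6(z)\, \Delta(z)^{-1}\, j(z)^{-2}$. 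Using $j(z) = E_4(z)^3/\Delta(z)$, this function simplifies to $\aqprod{q}{q}{\infty}\, E_6(z)\, \Delta(z)\, E_4(z)^{-4}$, i.e.\ $q^{-1/24}\eta(z)\cdot E_6(z)\, \eta(z)^{24}\, E_4(z)^{-4}$, a power series in $q$ with a definite $q$-expansion.

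The key step is then a congruence modulo $2$ for this $q$-series. Modulo $2$ we have $E_4(z) \equiv 1$ and $E_6(z) \equiv 1$ from~\eqref{E4Def} and~\eqref{E6Def} (all the nontrivial Fourier coefficients are even: $240$ and $504$ are even, and $\sigma_3(n)$, $\sigma_5(n)$ contribute through those coefficients — more precisely $E_4 \equiv E_6 \equiv 1 \pmod 2$ since $240$ and $504$ are divisible by $2$, in fact by $8$). Hence modulo $2$,
\begin{align*}
	\aqprod{q}{q}{\infty}\, E_6(z)\, \Delta(z)\, E_4(z)^{-4}
	\equiv
	\aqprod{q}{q}{\infty}\, \Delta(z)
	= q\, \aqprod{q}{q}{\infty}^{25} \pmod 2.
\end{align*}
So $c_{1,\ell}$ is, modulo $2$, the coefficient of $q^{s_\ell}$ in $q\,\aqprod{q}{q}{\infty}^{25}$, equivalently the coefficient of $q^{s_\ell - 1}$ in $\aqprod{q}{q}{\infty}^{25}$. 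Since $\aqprod{q}{q}{\infty}^{25} \equiv \aqprod{q}{q}{\infty}^{24}\aqprod{q}{q}{\infty} \equiv \aqprod{q^3}{q^3}{\infty}^{8}\aqprod{q}{q}{\infty} \pmod 2$ — or more simply, using $\aqprod{q}{q}{\infty} \equiv \aqprod{q^2}{q^2}{\infty}/\aqprod{q}{q}{\infty}$-type identities modulo $2$, or Jacobi's identity $\aqprod{q}{q}{\infty}^3 \equiv \sum_{k\ge 0} q^{k(k+1)/2} \pmod 2$ — I can write $\aqprod{q}{q}{\infty}^{25} = \left(\aqprod{q}{q}{\infty}^{3}\right)^{8}\aqprod{q}{q}{\infty}$. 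Modulo $2$, $\left(\aqprod{q}{q}{\infty}^{3}\right)^{8} \equiv \aqprod{q}{q}{\infty}^{24}$, and I will instead exploit $\aqprod{q}{q}{\infty}^{24} = \Delta(z) q^{-1} \equiv$ something tractable, or better, directly use that modulo $2$ the relevant exponent of $q$ is controlled by a theta-type series.

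The cleanest route, which I would pursue, is: modulo $2$, $c_{1,\ell}$ equals the coefficient of $q^{s_\ell - 1}$ in $\aqprod{q}{q}{\infty}^{25} \equiv \aqprod{q}{q}{\infty}\cdot\aqprod{q}{q}{\infty}^{24}$, and since $\aqprod{q}{q}{\infty}^{24} \equiv \aqprod{q^8}{q^8}{\infty}^{3}\pmod 2$ (as $\aqprod{q}{q}{\infty}^{2}\equiv\aqprod{q^2}{q^2}{\infty}$, hence $\aqprod{q}{q}{\infty}^{8}\equiv\aqprod{q^8}{q^8}{\infty}$, and $24 = 8\cdot 3$ gives $\aqprod{q}{q}{\infty}^{24}\equiv\aqprod{q^8}{q^8}{\infty}^3$), I get $c_{1,\ell}\equiv [q^{s_\ell-1}]\,\aqprod{q}{q}{\infty}\,\aqprod{q^8}{q^8}{\infty}^3\pmod 2$. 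By Jacobi, $\aqprod{q^8}{q^8}{\infty}^3 \equiv \sum_{j\ge 0} q^{4j(j+1)}\pmod 2$, and by Euler's pentagonal number theorem $\aqprod{q}{q}{\infty} \equiv \sum_{k}q^{k(3k-1)/2}\pmod 2$. Thus $c_{1,\ell}\equiv \#\{(j,k): 4j(j+1) + k(3k-1)/2 = s_\ell - 1\}\pmod 2$, and substituting $s_\ell = (\ell^2-1)/24$ turns the vanishing of this count mod $2$ into an elementary statement about representations of $(\ell^2-25)/24$ (roughly) that depends only on $\ell \bmod{24}$. The main obstacle is this final counting argument: I must show the number of solutions is even for every $\ell$ in the stated residue classes, which I expect to handle by an involution on the solution set (e.g.\ $k \mapsto -k$ composed with a sign adjustment, or pairing $j$ with a reflected value) that is fixed-point-free precisely under the congruence conditions on $\ell$ — the residue restrictions $\ell\equiv 1,11,13,17,19,23\pmod{24}$ should be exactly what rules out the diagonal fixed points. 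An alternative, if the involution is awkward, is to recognize the $q$-series above as (a multiple of) a known eta-quotient that is lacunary or whose coefficients at arguments $\equiv$ relevant residues are manifestly even, and cite or re-derive that.
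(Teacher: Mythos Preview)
Your reduction coincides with the paper's: you correctly identify $c_{1,\ell}$ as the coefficient of $q^{s_\ell}$ in $\aqprod{q}{q}{\infty}E_6\Delta/E_4^4\equiv\aqprod{q}{q}{\infty}\Delta\pmod 2$, and your expansion via Euler and Jacobi yields precisely the paper's Diophantine condition (your $4j(j+1)+1=(2j+1)^2$ is the paper's $(2m-1)^2$ with $m=j+1$). The substantive content, however, is the counting step you flag as the ``main obstacle,'' and there the proposed involution does not work.

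After completing squares and clearing denominators, the count is the number of representations $\ell^2=a^2+6b^2$ with $a\equiv 5\pmod 6$ and $b>0$, $b\equiv 2\pmod 4$. The obvious symmetries $(a,b)\mapsto(\pm a,\pm b)$ do not pair solutions: exactly one of $\pm a$ lies in the required class modulo $6$, the condition $b>0$ selects one of $\pm b$, and $b\equiv 2\pmod 4$ is symmetric under $b\mapsto -b$. So each essentially distinct representation contributes exactly one, not two, and your map $k\mapsto -k$ changes the pentagonal exponent rather than fixing it. In fact the count is not merely even but \emph{zero} for the residue classes in question, and establishing zero needs structure, not a pairing. The paper supplies this via binary quadratic forms: for $\ell\equiv 13,17,19,23\pmod{24}$ one has $\Jac{-24}{\ell}=-1$, so $x^2+6y^2$ has no primitive representation of $\ell^2$ and hence none with $b\ne 0$; for $\ell\equiv 1,11\pmod{24}$ one has $\Jac{-24}{\ell}=1$, and the paper passes to discriminant $-384$, using the Sun--Williams representation-count theorem together with a short class-group computation to show that all six representations of $\ell^2$ by $x^2+6y^2$ in fact arise from $x^2+96y^2$, forcing $b\equiv 0\pmod 4$. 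Thus $b\equiv 2\pmod 4$ is impossible and the count is zero. Your alternative (``recognize the eta-quotient as lacunary'') is morally this same argument, but it is not something one can simply cite: the specific input required is the $2$-adic behaviour of $y$ in $\ell^2=x^2+6y^2$, and that is exactly what the quadratic-form analysis provides.
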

\begin{proof}
The coefficient of $x$ in (\ref{DefinitionOfA}) is given by
\begin{align}\label{EqForX}
	\aqprod{q}{q}{\infty} \frac{E_4(z)^2E_6(z)}{\Delta(z)}\frac{1}{j(z)^2}
	&= \frac{\aqprod{q}{q}{\infty}E_6(z)\Delta(z)}{E_4(z)^4}	
	\nonumber\\
	&\equiv \aqprod{q}{q}{\infty}\Delta(z) \pmod{2}
	\nonumber\\
	&\equiv \sum_{k=-\infty}^{\infty}	q^{(3k^2-k)/2}
		\sum_{m=1}^{\infty} q^{(2m-1)^2} \pmod{2}.
\end{align}

The coefficient of $q^{s_\ell}$ in (\ref{EqForX}) occurs when $k$ and $m$ are integers, $m\ge 1$, and are a solution to
\begin{align}
	\frac{\ell^2-1}{24} &= \frac{3k^2-k}{2} + (2m-1)^2, 
	\\\label{prop1Eq1}
	\ell^2 &= (6k-1)^2 + 6(4m-2)^2. 
\end{align}

We  consider the binary quadratic form 
\begin{align}
	f(x,y) &= x^2 + 6y^2.
\end{align}
We see that $f(x,y)$ has discriminant $-24$. Computing the Legendre symbol we find for $\ell$ prime that 
\begin{align}
	\Jac{-24}{\ell}
	&=
	\left\{
   	\begin{array}{rl}
      	1  & \hspace{15pt}\ell\equiv 1,5,7,11 \pmod{24} \\
       	-1 & \hspace{15pt}\ell\equiv 13,17,19,23 \pmod{24}.
     	\end{array}
	\right.
\end{align}

First we consider $\ell$ prime such that $\ell\equiv 13,17,19,23$. Since $\Jac{-24}{\ell}=-1$, we know $f(x,y)$ does not represent $\ell^2$ with $x$ and $y$ relatively prime. Thus $f(x,y)$ only represents $\ell^2$ when $(x,y) = (\pm\ell,0)$, and so there are no solutions to (\ref{prop1Eq1}) with $m$ an integer. That is, for prime $\ell\equiv 13,17,19,23 \pmod{24}$, we know $c_{1,\ell}$ to be even. 

For the rest of the proof we only consider $\ell$ prime such that $\ell\equiv 1,11 \pmod{24}$. Here it is the case that $f(x,y)$ represents $\ell^2$ with $x$ and $y$ relatively prime. We will show that in these cases the substitution 
\begin{align}\label{prop1SubForY}
	4m-2 &= y 
\end{align}
also does not give an integer value for $m$.

The only other primitive form of discriminant $-24$ is given by
\begin{align}
	g(x,y)=2x^2+3y^2.
\end{align}
A computation shows that $g(x,y)$ is never congruent to $1 \pmod{24}$ and so $g$ does not represent $\ell^2$. Since $\Jac{-24}{\ell} = 1$ we have by Corollary 4.1 of Sun and Williams \cite{SunAndWilliams} that $\ell^2$ is represented exactly six times by $f(x,y)$. These six representations are from $(x,y)=(\pm \ell,0)$ and $(x,y)=(\pm x_0,\pm y_0)$ where $x_0,y_0$ are positive and relatively prime integers. We will show the binary quadratic form $h$, given by
\begin{align}
	h(x,y) &= x^2+96y^2,
\end{align}
also represents $\ell^2$ six times. With this we would know $y_0\equiv 0 \pmod{4}$ and so the substitution in 
(\ref{prop1SubForY}) does not give an integer value for $m$.

We let $(a,b,c)$ denote the binary quadratic form $ax^2+bxy+cy^2$ and $[a,b,c]$ denote the equivalence class of binary quadratic forms equivalent to $(a,b,c)$. 

The discriminant of $h(x,y)$ is $d=-384$. The primitive forms of discriminant $-384$, including $h$, are given by
\begin{align}\label{prop1Primitives}
	 &(1,0,96), (3,0,32), (4,4,25), (5,4,20), (5,-4,20), (7,6,15), (7,-6,15),
	\nonumber\\
	&\mbox{and }(11,10,11).
\end{align}
By Corollary 4.1 of \cite{SunAndWilliams} we know that altogether the forms in (\ref{prop1Primitives}) represent $\ell^2$ six times total. However, a computation shows that $(3,0,32)$, $(5,4,20)$, $(5,-4,20)$, $(7,6,15)$, $(7,-6,15)$, and $(11,10,11)$ are never congruent to $1 \pmod{24}$, hence these forms cannot represent $\ell^2$. The only remaining forms are then $(1,0,96)$ and $(4,4,25)$.
Since $\Jac{-24}{\ell}=1$ we do know at least one of the forms in (\ref{prop1Primitives}) represents $\ell$. 

In the case $\ell\equiv 1\pmod{24}$, we know $(1,0,96)$ or $(4,4,25)$ must represent $\ell$. However Dirichlet composition gives 
$[4,4,25]^2 = [1,0,96]$ and so regardless of which form represents $\ell$, Theorem 5.1 of \cite{SunAndWilliams} gives that $(1,0,96)$ represents $\ell^2$ six times.

In the case $\ell\equiv 11\pmod{24}$, we find that of the forms in (\ref{prop1Primitives}) only $(3,0,32)$ and $(11,10,11)$ are ever congruent to 
$11 \pmod{24}$, so it must be one of these two forms that represents $\ell$.
Here Dirichlet composition gives $[3,0,32]^2 = [1,0,96] = [11,10,11]^2$. In particular the forms $(3,0,32)$ and $(11,10,11)$ both represent elements of order two in the class group and $[4,4,25]$ is a power of neither. By Theorem 5.1 of \cite{SunAndWilliams} we have then that $(4,4,25)$ does not represent $\ell^2$. Thus all representations of $\ell^2$ must be from $x^2+96y^2$. 

Since $h(x,y)$ represents $\ell^2$ six times, the six representations of $\ell^2$ by $f(x,y)$ must have $y\equiv 0\pmod{4}$ and so the substitution in (\ref{prop1SubForY}) does not give an integer value for $m$. Hence there are no integer solutions to (\ref{prop1Eq1}) and so $c_{1,\ell}$ is even in the cases of $\ell\equiv 1,11\pmod{24}$ as well.

\end{proof}
That the form $x^2+96y^2$ represents $\ell^2$, with $x$ and $y$ non-zero, for $\ell\equiv 11\pmod{24}$ has also been proved and used in \cite{OnoAndPenniston1}, \cite{OnoAndPenniston2} by Ono and Penniston as part of obtaining a formula for the number, modulo $8$, of partitions of $n$ into distinct parts.

\begin{proposition}\label{prop2}
If $\ell\ge 5$ is prime and $\ell\equiv 1,5,11,17,19,23\pmod{24}$, then $c_{2,\ell}\equiv 0\pmod{2}$.
\end{proposition}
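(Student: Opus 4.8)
Let me describe how I would attack this, modeled closely on the proof of Proposition \ref{prop1}.

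\medskip

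\noindent\textbf{Reducing to a representation problem.}
First I would identify $c_{2,\ell}$. Writing $\frac{1}{j(z)-x}=\sum_{i\ge 0}x^i j(z)^{-i-1}$ in (\ref{DefinitionOfA}) and using $j(z)=E_4(z)^3/\Delta(z)$, the coefficient of $x^2$ is
\begin{align*}
	\aqprod{q}{q}{\infty}\frac{E_4(z)^2E_6(z)}{\Delta(z)}\frac{1}{j(z)^3}
	&=\aqprod{q}{q}{\infty}\frac{E_6(z)\Delta(z)^2}{E_4(z)^7}
	\equiv\aqprod{q}{q}{\infty}\Delta(z)^2\pmod{2},
\end{align*}
since $E_4(z)\equiv E_6(z)\equiv 1\pmod{2}$. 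Using $\aqprod{q}{q}{\infty}\equiv\sum_{k}q^{(3k^2-k)/2}\pmod{2}$, the congruence $\Delta(z)\equiv\sum_{m=1}^\infty q^{(2m-1)^2}\pmod{2}$ already appearing in the proof of Proposition \ref{prop1}, and the Frobenius congruence $\Delta(z)^2\equiv\sum_{m=1}^\infty q^{2(2m-1)^2}\pmod{2}$, I find that $c_{2,\ell}$ is congruent modulo $2$ to the number of integer pairs $(k,m)$ with $m\ge 1$ and $\frac{3k^2-k}{2}+2(2m-1)^2=s_\ell$. Multiplying by $24$ and using $24s_\ell=\ell^2-1$ turns this into the number of $(k,m)$ with $m\ge 1$ and $(6k-1)^2+48(2m-1)^2=\ell^2$. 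Since any $x$ with $x^2+48y^2=\ell^2$ is coprime to $6$ and $\pm x$ covers both residues $\pm 1\pmod 6$, it suffices to show that the form $f(x,y)=x^2+48y^2$ does not represent $\ell^2$ with $y$ odd.

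\medskip

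\noindent\textbf{The case $\ell\equiv 5,11,17,23\pmod{24}$.}
The form $f$ has discriminant $-192$, and $\Jac{-192}{\ell}=\Jac{-3}{\ell}$ equals $-1$ for $\ell\equiv 5,11,17,23\pmod{24}$ and $+1$ for $\ell\equiv 1,7,13,19\pmod{24}$. If $\Jac{-192}{\ell}=-1$, then $-192$ is a non-residue modulo $\ell$, so $\ell^2$ is not primitively represented by any form of discriminant $-192$; the only representations of $\ell^2$ by $f$ are then the trivial ones $(x,y)=(\pm\ell,0)$, none with $y$ odd. This disposes of $\ell\equiv 5,11,17,23\pmod{24}$.

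\medskip

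\noindent\textbf{The case $\ell\equiv 1,19\pmod{24}$.}
Here I would proceed exactly as in the $\ell\equiv1,11$ case of Proposition \ref{prop1}, using the auxiliary form $h(x,y)=x^2+192y^2=x^2+48(2y)^2$ of discriminant $-768$. The reduced primitive forms of discriminant $-192$ are $(1,0,48)$, $(3,0,16)$, $(4,4,13)$, $(7,2,7)$; a short computation shows that modulo $8$ the last three represent only residues in $\{0,3,4,5,7\}$, so only $(1,0,48)$ can represent $\ell^2\equiv 1\pmod{8}$, and since $\Jac{-192}{\ell}=1$, Corollary 4.1 of \cite{SunAndWilliams} shows $\ell^2$ has exactly six representations by forms of discriminant $-192$, all by $f$. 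For discriminant $-768$ the reduced primitive forms are $(1,0,192)$, $(3,0,64)$, $(4,4,49)$, $(7,4,28)$, $(7,-4,28)$, $(12,12,19)$, $(13,8,16)$, $(13,-8,16)$; modulo $8$ only $(1,0,192)$ and $(4,4,49)$ can represent $\ell^2\equiv 1\pmod 8$, and moreover $(1,0,192),(3,0,64),(4,4,49),(12,12,19)$ are precisely the ambiguous classes (each having $b=0$ or $b=\pm a$), hence of order at most two in the class group. Since $\ell\equiv 1,19\pmod{24}$ forces $\ell\equiv 1$ or $3\pmod 8$, the form representing $\ell$ must take a value $\equiv 1$ or $3\pmod 8$, and the mod $8$ table then shows this form is one of the ambiguous forms $(1,0,192),(4,4,49)$ (when $\ell\equiv 1\pmod 8$) or $(3,0,64),(12,12,19)$ (when $\ell\equiv 3\pmod 8$). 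Its square is therefore the principal form $(1,0,192)$, so by Dirichlet composition and Theorem 5.1 of \cite{SunAndWilliams}, $\ell^2$ has exactly six representations by forms of discriminant $-768$, all by $h(x,y)=x^2+192y^2$.

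\medskip

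\noindent\textbf{Conclusion and expected obstacle.}
Comparing the two counts finishes the argument: $(x,y)\mapsto(x,2y)$ is a bijection between representations of $\ell^2$ by $x^2+192y^2$ and representations of $\ell^2$ by $x^2+48y^2$ with $y$ even, and since both forms represent $\ell^2$ exactly six times, every representation of $\ell^2$ by $f$ has $y$ even. Hence $(6k-1)^2+48(2m-1)^2=\ell^2$ has no solution with $m\ge 1$, so $c_{2,\ell}\equiv 0\pmod{2}$ for $\ell\equiv 1,19\pmod{24}$ as well, completing all cases. I expect the main obstacle to be the discriminant $-768$ step: enumerating its eight reduced forms, running the mod $8$ table, and — crucially — verifying that for $\ell\equiv 1,19\pmod{24}$ the form representing $\ell$ is ambiguous, so that $\ell^2$ is forced onto the principal form. (This is exactly what fails for the excluded residues $\ell\equiv 7,13\pmod{24}$, where $\ell$ is represented by a non-ambiguous form of order four and $\ell^2$ then picks up a representation by $(4,4,49)$ with $y$ odd.) Everything else is a finite check or a direct appeal to Sun and Williams as in Proposition \ref{prop1}.
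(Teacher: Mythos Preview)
Your proof is correct and follows essentially the same route as the paper: reduce the parity of $c_{2,\ell}$ to a representation problem for $\ell^2$ by a binary quadratic form, dispose of $\ell\equiv 5,11,17,23\pmod{24}$ via the Legendre symbol, and for $\ell\equiv 1,19\pmod{24}$ pass to the auxiliary form $x^2+192y^2$ of discriminant $-768$ and invoke Sun--Williams to force the $y$-coordinate into the required congruence class.

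The one difference worth noting is your choice of starting form. The paper writes $\Delta(z)^2\equiv q^2\aqprod{q^{16}}{q^{16}}{\infty}^3\pmod{2}$ and obtains $\ell^2=(6k-1)^2+3(8m+4)^2$, so it works with $f(x,y)=x^2+3y^2$ of discriminant $-12$; since this has class number one, the fact that all six representations of $\ell^2$ lie on the principal form is automatic, and one jumps directly to $-768$. You instead use $f(x,y)=x^2+48y^2$ of discriminant $-192$ (class number four), which obliges you to insert the extra mod~$8$ check that only the principal form at $-192$ can represent $\ell^2$. Both choices are valid and converge on the same discriminant $-768$ analysis; the paper's is slightly more economical, while yours makes the ``halving'' step $y\mapsto 2y$ between $-192$ and $-768$ more transparent.
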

\begin{proof}
The proof is similar to the previous proposition and so we omit some of the details. The coefficient of $x^2$ in (\ref{DefinitionOfA}) is given by
{\allowdisplaybreaks\begin{align}\label{EqForX2}
	\aqprod{q}{q}{\infty} \frac{E_4(z)^2E_6(z)}{\Delta(z)}\frac{1}{j(z)^3}
	&= \frac{\aqprod{q}{q}{\infty}E_6(z)\Delta(z)^2}{E_4(z)^7}	
	\nonumber\\
	&\equiv \aqprod{q}{q}{\infty}\Delta(z)^2 \pmod{2}
	\nonumber\\
	&\equiv \aqprod{q}{q}{\infty}q^2\aqprod{q^{16}}{q^{16}}{\infty}^3 \pmod{2}
	\nonumber\\
	&\equiv \sum_{k=-\infty}^{\infty}	q^{(3k^2-k)/2}
		\sum_{m=0}^{\infty} q^{8m^2+8m+2} \pmod{2}.	
\end{align}}

The coefficient of $q^{s_\ell}$ in (\ref{EqForX2}) occurs when $k$ and $m$ are integers, $m\ge 0$, and are a solution to
\begin{align}
	\frac{\ell^2-1}{24} &= \frac{3k^2-k}{2} + 8m^2 +8m + 2,
	\\\label{prop2Eq1}	
	\ell^2 &= (6k-1)^2 + 3(8m+4)^2.
\end{align}

We consider the binary quadratic form 
\begin{align}
	f(x,y) &= x^2 + 3y^2.
\end{align} 
We see that $f(x,y)$ has discriminant $-12$. Computing the Legendre symbol we find for $\ell$ prime that
\begin{align}
	\Jac{-12}{\ell}
	&=
	\left\{
   	\begin{array}{rl}
      	1  & \hspace{15pt}\ell\equiv 1,7,13,19 \pmod{24} \\
       	-1 & \hspace{15pt}\ell\equiv 5,11,17,23 \pmod{24}.
     	\end{array}
	\right.
\end{align}
This proves the proposition for $\ell\equiv 5,11,17,23\pmod{24}$, however we still must handle the cases $\ell\equiv 1,19\pmod{24}$. 

We note that $f(x,y)$ is the only primitive form of discriminant $-12$. We also use the primitive form 
$h(x,y)=x^2+192y^2$ of discriminant $-768$. The primitive forms of discriminant $-768$ are given by
\begin{align}\label{prop2Primitives}
	&(1,0,192), (3,0,64), (4,4,49), (7,4,28), (7,-4,28), (13,8,16), (13,-8,16),
	\nonumber\\
	&\mbox{and } (12,12,19).
\end{align}

Again by using Corollary 4.1 and Theorem 5.1 of \cite{SunAndWilliams} we find there are six representations of $\ell^2$ by $f(x,y)$ and these correspond to six representations by $h(x,y)$. We then know the representations of $\ell^2$ by $f(x,y)$ have $y\equiv 0\pmod{8}$ and so there are no integer solutions to (\ref{prop2Eq1}). Hence $c_{2,\ell}$ is even in the cases of $\ell\equiv 1,19\pmod{24}$ as well.

\end{proof}

\begin{proposition}\label{prop3}
If $\ell\equiv 1,11,17,19\pmod{24}$ then $c_{1,\ell}+2c_{3,\ell}\equiv 0\pmod{4}$.
\end{proposition}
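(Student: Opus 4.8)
The plan is to pin down $c_{1,\ell}+2c_{3,\ell}$ modulo $4$ explicitly and then, as in Propositions~\ref{prop1} and~\ref{prop2}, to reduce the statement to counting representations of $\ell^{2}$ by quadratic forms with congruence conditions on the variables, which one resolves using the genus theory of Sun and Williams.

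First I would extract the relevant coefficients from~(\ref{DefinitionOfA}). Expanding $\frac{1}{j-x}=\sum_{n\ge0}x^{n}j^{-n-1}$ and using $j^{-n-1}=\Delta^{n+1}E_{4}^{-3n-3}$, the coefficient of $x^{n}$ in~(\ref{DefinitionOfA}) is $\aqprod{q}{q}{\infty}E_{6}\Delta^{n}E_{4}^{-3n-1}$, so by~(\ref{EqForC})
\[
c_{1,\ell}+2c_{3,\ell}=[q^{s_\ell}]\,\aqprod{q}{q}{\infty}\,\frac{E_{6}}{E_{4}}\bigl(j^{-1}+2j^{-3}\bigr).
\]
Since $240\equiv0\pmod{16}$ and $504\equiv0\pmod8$ we have $E_{4}\equiv1\pmod{16}$ and $E_{6}\equiv1\pmod8$, so every Eisenstein factor here is $\equiv1\pmod4$ and the identity collapses to
\[
c_{1,\ell}+2c_{3,\ell}\equiv[q^{s_\ell}]\,\aqprod{q}{q}{\infty}\bigl(\Delta+2\Delta^{3}\bigr)\pmod4 .
\]

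Because $c_{1,\ell}$ is even by Proposition~\ref{prop1}, the claim is equivalent to $\tfrac12[q^{s_\ell}]\aqprod{q}{q}{\infty}\Delta\equiv[q^{s_\ell}]\aqprod{q}{q}{\infty}\Delta^{3}\pmod2$. For the right side I would argue modulo $2$ as in the proofs above: from $\Delta\equiv q\aqprod{q^{8}}{q^{8}}{\infty}^{3}\pmod2$ one gets $\Delta^{3}\equiv q^{3}\aqprod{q^{8}}{q^{8}}{\infty}^{9}\equiv q^{3}\aqprod{q^{8}}{q^{8}}{\infty}\aqprod{q^{64}}{q^{64}}{\infty}\pmod2$, and combining this with the pentagonal expansion of $\aqprod{q}{q}{\infty}$ turns $[q^{s_\ell}]\aqprod{q}{q}{\infty}\Delta^{3}\bmod2$ into the parity of the number of representations of $\ell^{2}$ by $x^{2}+8y^{2}+64z^{2}$ with $x,y,z\equiv-1\pmod6$. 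For the left side I need $\aqprod{q}{q}{\infty}\Delta$ modulo $4$; the key input is
\[
\aqprod{q}{q}{\infty}^{24}\equiv\psi(q^{4})^{2}\pmod4,\qquad \psi(q):=\sum_{n\ge0}q^{n(n+1)/2},
\]
which follows from $\eta^{24}=(\eta^{3})^{8}$, Jacobi's identity $\aqprod{q}{q}{\infty}^{3}=\sum_{n\ge0}(-1)^{n}(2n+1)q^{n(n+1)/2}$, and the elementary fact $(A+2B)^{4}\equiv A^{4}\pmod8$ (this is a form of $\tau(n)\equiv r_{2}(n)/4\pmod4$ for odd $n$). Then $\aqprod{q}{q}{\infty}\Delta\equiv q\,\aqprod{q}{q}{\infty}\psi(q^{4})^{2}\pmod4$, and extracting $[q^{s_\ell}]$ and halving turns $\tfrac12[q^{s_\ell}]\aqprod{q}{q}{\infty}\Delta$ into a signed count (with signs $\chi_{12}(x)$) of the representations of $\ell^{2}$ by $x^{2}+3y^{2}+3z^{2}$ with $x\equiv-1\pmod6$ and $y,z$ positive and $\equiv2\pmod4$.

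It remains to match these two representation-theoretic quantities. As in Propositions~\ref{prop1} and~\ref{prop2}, this is where the genus theory of the relevant forms enters, via Corollary~4.1 and Theorem~5.1 of \cite{SunAndWilliams}: one would split off the variables confined to even residue classes (so that $x^{2}+3y^{2}+3z^{2}=x^{2}+3(y^{2}+z^{2})$ becomes $x^{2}+12(u^{2}+v^{2})$, and similarly on the other side) and absorb the remaining two-square sums into auxiliary theta series, degenerating the ternary counts into counts by binary forms of discriminant divisible by a higher power of $2$, among them $x^{2}+96y^{2}$, which already appeared in the proof of Proposition~\ref{prop1}. The hypothesis $\ell\equiv1,11,17,19\pmod{24}$ enters precisely as $\Jac{-2}{\ell}=1$, i.e.\ $\ell\equiv1,3\pmod8$, and it is this congruence that forces the two binary counts to agree modulo $2$. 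The main obstacle is this last step: the naive reductions produce ternary rather than binary forms, so the argument of Propositions~\ref{prop1} and~\ref{prop2} does not transfer verbatim, and one must either find a sharper eta-quotient identity modulo $4$ collapsing the ternary sums to binary ones, or supplement the Sun--Williams class-number machinery with information on the genera of the ternary forms involved; keeping careful track of the single power of $2$ separating $\tfrac12 c_{1,\ell}$ from $c_{3,\ell}$ is the delicate bookkeeping point.
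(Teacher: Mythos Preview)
Your reduction to $c_{1,\ell}+2c_{3,\ell}\equiv[q^{s_\ell}]\,\aqprod{q}{q}{\infty}(\Delta+2\Delta^{3})\pmod{4}$ is correct and is exactly where the paper begins. After that, your plan is to extend the binary-quadratic-form method of Propositions~\ref{prop1} and~\ref{prop2}, and you are candid that it stalls: the mod-$4$ expansion of $\aqprod{q}{q}{\infty}\Delta$ and the mod-$2$ expansion of $\aqprod{q}{q}{\infty}\Delta^{3}$ produce \emph{ternary} theta sums, so the Sun--Williams results you cite (which concern binary forms) do not apply directly. You propose collapsing the ternary sums to binary ones via sharper eta identities, or invoking ternary genus theory, but neither is carried out; the ``main obstacle'' you name at the end is the decisive step, and as written the argument does not close.

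The paper does not attempt to rescue the quadratic-form route here. Instead, after the same initial reduction, it passes to the half-integral-weight cusp form $g(z)=\eta(24z)\Delta(24z)E_{4}(24z)^{6}+2\eta(24z)\Delta(24z)^{3}\in S_{73/2}(\Gamma_{0}(576),\chi_{12})$, whose coefficient $b(\ell^{2})$ is congruent to $a(s_\ell)$ modulo $4$. Since $b(1)=0$, the $t=1$ Shimura lift satisfies $B(\ell)=b(\ell^{2})$ for primes $\ell$, so it suffices to show $B(n)\equiv0\pmod{4}$ for all $n\equiv1,11,17,19\pmod{24}$. The lift is then matched modulo $4$ with an explicit combination $h\otimes\chi_{12}$ of eta-quotients lying in $S_{72}(\Gamma_{0}(144))$, and the final congruence $P(h)\equiv0\pmod{4}$ (with $P$ the projection onto those residue classes, written as a sum of character twists) is verified by Sturm's theorem via a finite computer check to $q^{6912}$. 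Thus the missing idea in your proposal is precisely this Shimura-lift-plus-Sturm manoeuvre; your observation that $\ell\equiv1,11,17,19\pmod{24}$ is exactly $\Jac{-2}{\ell}=1$ is correct and suggestive, but it is not what the paper uses, and by itself does not bridge your gap.
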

\begin{proof}The coefficient of $x$ in (\ref{DefinitionOfA}) is given by
\begin{align}
	\aqprod{q}{q}{\infty}\frac{E_4(z)^2E_6(z)}{\Delta(z)}\frac{1}{j(z)^2},
\end{align}
whereas the coefficient of $x^3$ in (\ref{DefinitionOfA}) is given by
\begin{align}
	\aqprod{q}{q}{\infty}\frac{E_4(z)^2E_6(z)}{\Delta(z)}\frac{1}{j(z)^4}.
\end{align}	
We note that
\begin{align}
	&\aqprod{q}{q}{\infty}\frac{E_4(z)^2E_6(z)}{\Delta(z)}\frac{1}{j(z)^2}
	+2\aqprod{q}{q}{\infty}\frac{E_4(z)^2E_6(z)}{\Delta(z)}\frac{1}{j(z)^4}	
	\nonumber\\&= \frac{\aqprod{q}{q}{\infty}E_6(z)\Delta(z)}{E_4(z)^4}	
		+2\frac{\aqprod{q}{q}{\infty}E_6(z)\Delta(z)^3}{E_4(z)^{10}}
	\nonumber\\
	&\equiv \aqprod{q}{q}{\infty}\Delta(z) 
		+ 2\aqprod{q}{q}{\infty}\Delta(z)^3
	\pmod{4}.
\end{align}
Thus $c_{1,\ell}+2c_{3,\ell}$ modulo $4$ is the coefficient of $q^{s_\ell}$ in
\begin{align}\label{C1AndC3Mess}
	\aqprod{q}{q}{\infty}\Delta(z) 
		+ 2\aqprod{q}{q}{\infty}\Delta(z)^3.
\end{align}
We suppose the series expansion of (\ref{C1AndC3Mess}) is give by
\begin{align}\label{prop3Eq1}
	\aqprod{q}{q}{\infty}\Delta(z) + 2\aqprod{q}{q}{\infty}\Delta(z)^3
	&= \sum_{n=1}^\infty a(n)q^n = q + \dots.
\end{align}

As in \cite{Ono1}, the Shimura lift of a half integral weight modular form is given as follows. Suppose that 
$g(z)=\sum_{n=1}^\infty b(n)q^n\in S_{\lambda+\frac{1}{2}}(\Gamma_0(4N),\chi)$ and $\lambda\ge 2$, then 
$\SLift{t}{\lambda}{g(z)} \in S_{2\lambda}(\Gamma_0(2N),\chi^2)$ where
\begin{align}
	\SLift{t}{\lambda}{g(z)} := \sum_{n=1}^\infty B_t(n)q^n,
	\\
	B_t(n) := \sum_{d\mid n}\psi_t(d)d^{\lambda-1}b\Parans{\frac{tn^2}{d^2}},
\end{align}
and $\psi_t$ is the Dirichlet character defined by $\psi_t(n) = \chi(n)\Jac{-1}{n}^\lambda\Jac{t}{n}$.

Consider 
\begin{align}
	g(z) &:= \eta(24z)\Delta(24z)E_4^6(24z)+ 2\eta(24z)\Delta(24z)^3,
\end{align}
this is a weight $36+\frac{1}{2}$ modular form on $\Gamma_0(576)$ with character
$\chi_{12}$. We suppose the $q$-series expansion of $g$ is
\begin{align}\label{prop3Eq2}
	g(z) &= \sum_{n=1}^\infty b(n) = q^{25} + \dots.
\end{align}

We recall that $s_\ell = \frac{\ell^2-1}{24}$ and so comparing the coefficients in (\ref{prop3Eq1}) and (\ref{prop3Eq2})
we have
\begin{align}
	b(\ell^2) &\equiv a(s_\ell) \pmod{4}.
\end{align}
Thus to prove the proposition, we can instead prove
\begin{align}\label{prop3Eq3}
	b(\ell^2) &\equiv 0 \pmod{4}
\end{align}
for primes $\ell\equiv 1,11,17,19 \pmod{24}$.

Now we consider the $t=1$ Shimura lift of $g(z)$, say
\begin{align}
	\SLift{1}{36}{g(z)} &= \sum_{n=1}^\infty B(n)q^n.
\end{align}
The $B(n)$ are given by
\begin{align}
	B(n) &= \sum_{d\mid n}\psi_1(d)d^{35}b\Parans{\frac{n^2}{d^2}},
\end{align}
where
\begin{align}
	\psi_1(n) &= \chi_{12}(n)\Jac{-1}{n}^{36}\Jac{1}{n}.
\end{align}
For $\ell$ prime we have
\begin{align}
	B(\ell) &= \psi_1(1)b(\ell^2) + \psi_1(\ell)\ell^{35}b(1)
	\nonumber\\
	&= b(\ell^2),
\end{align}
noting that $b(1) = 0$. We will show
\begin{align}
	B(n)\equiv 0 \pmod{4},
\end{align}
for all $n\equiv 1,11,17,19\pmod{24}$. This will then prove (\ref{prop3Eq3}).

We define
\begin{align}
	g_1(z) &:= \eta(24z)\Delta(24z),
	\\
	g_2(z) &:= 2\eta(24z)\Delta(24z)^3.
\end{align}
So $g(z)\equiv g_1(z)+g_2(z)\pmod{4}$ and since $d^{11}\equiv d^{35}\pmod{4}$ we have
\begin{align}\label{SLifts}
	\SLift{1}{36}{g(z)}\equiv \SLift{1}{12}{g_1(z)}+\SLift{1}{36}{g_2(z)}\pmod{4}.
\end{align}

For a given modular form $f(z)=\sum a(n)q^n$ we let $f\otimes\chi$ denote the twist of $f$ by a Dirichlet character $\chi$,
\begin{align}
 	f(z)\otimes\chi = \sum \chi(n)a(n)q^n. 
\end{align}
If $f\in S_{\lambda}(\Gamma_0(N),\psi)$,  and $\chi$ is a character modulo $m$, then as in \cite{Ono1} we know  $f\otimes\chi\in S_{\lambda}(\Gamma_0(Nm^2),\psi\chi^2)$. Furthermore, By Lemma 4.3.10 of \cite{Miyake}, if $\psi$ has conductor $m_\psi$, $\chi$ is a primitive character of conductor $m_\chi$, and $M$ is the least common multiple of 
$N$, $m_\chi^2$, and $m_\chi m_\psi$, then in fact
$f\otimes\chi\in S_{\lambda}(\Gamma_0(M),\psi\chi^2)$.
\\

We claim
\begin{align}\label{Weight24Lift}
	\SLift{1}{12}{g_1(z)}
	\equiv&
		\eta(2z)^{12}\eta(3z)^{24}\eta(6z)^{12}\otimes\chi_{12}
		\nonumber\\
		&-\eta(2z)^{24}\eta(3z)^{24}\otimes\chi_{12}
	\pmod{4}.
\end{align}
By Theorems 1.64 and  1.65 of \cite{Ono1} we know
$\eta(2z)^{12}\eta(3z)^{24}\eta(6z)^{12}$ and $\eta(2z)^{24}\eta(3z)^{24}$ are elements of $S_{24}(\Gamma_0(6))$.
Since $\chi_{12}$ is a primitive character with conductor $12$ we know the right hand side of $(\ref{Weight24Lift})$ is an element of $S_{24}(\Gamma_0(144))$. By Theorem 1 of Yang \cite{Yang} it turns out $\SLift{1}{12}{g_1(z)}$ is in fact also a cusp form on $\Gamma_0(6)$ twisted by $\chi_{12}$. Thus $\SLift{1}{12}{g_1(z)}$ is an element of $S_{24}(\Gamma_0(144))$ as well. To verify the congruence in (\ref{Weight24Lift}) we need only verify it holds out to the power of $q$ given by Sturm's theorem, which is $576$. This verification is done in Maple.

Next we claim
\begin{align}\label{Weight72Lift}
	\SLift{1}{36}{g_2(z)}
	\equiv&
	2\eta(z)^2\eta(2z)^2\eta(3z)^{10}\eta(6z)^{130}\otimes\chi_{12}\nonumber\\	
	&+2\eta(z)\eta(2z)^7\eta(3z)^5\eta(6z)^{131}\otimes\chi_{12}
	\nonumber\\
	&+2\eta(z)\eta(2z)^{-5}\eta(3z)^5\eta(6z)^{143}\otimes\chi_{12}
	\nonumber\\
	&+2\eta(z)^{-19}\eta(2z)^{-1}\eta(3z)^{25}\eta(6z)^{139}\otimes\chi_{12}
	\nonumber\\
	&+2\eta(2z)^{24}\eta(3z)^{120}\otimes\chi_{12}
	\nonumber\\
	&+2\eta(z)^{-31}\eta(2z)^{-1}\eta(3z)^{37}\eta(6z)^{139}\otimes\chi_{12}
	\nonumber\\
	&+2\eta(2z)^{132}\eta(3z)^{24}\eta(6z)^{-12}\otimes\chi_{12}
	\pmod{4}.
\end{align}
Again by Theorems 1.64 and  1.65 of \cite{Ono1} we know the right hand side of $(\ref{Weight72Lift})$ is an element of $S_{72}(\Gamma_0(144))$. Again by Theorem 1 of \cite{Yang} we know $\SLift{1}{36}{g_2(z)}$ to also be an element of $S_{72}(\Gamma_0(144))$. To verify the congruence in (\ref{Weight72Lift}) we need only verify it holds out to the power of $q$ given by Sturm's theorem, which is $1728$. This verification is done in Maple.

For a modular form $f(z) = \sum a(n)q^n$ we consider
\begin{align}
	P(f(z)) &= \sum_{n\equiv 1,11,17,19\pmod{24}}a(n)q^n.
\end{align}
We let $\psi_2$ denote the unique character modulo $2$, $\psi_3$ the unique primitive character modulo $3$, and $\psi_8$ the primitive character modulo $8$ given by
\begin{align}
	\psi_8(n) &=
	\left\{
   	\begin{array}{rl}
      	1  & \hspace{15pt} n \equiv 1,3\pmod{8} \\
       	-1 & \hspace{15pt} n \equiv 5,7\pmod{8}.
     	\end{array}
	\right.
\end{align} 
With this we find that
\begin{align}\label{prop3PByTwists}
	P(f(z)) &= \frac{1}{2}f(z)\otimes\psi_2\otimes\psi_3\otimes\psi_3 
			+\frac{1}{2}f(z)\otimes\psi_2\otimes\psi_3\otimes\psi_3\otimes\psi_8.
\end{align}

We recall that we're trying to show $P(\SLift{1}{36}{g(z)})\equiv 0\pmod{4}$. We define a modular form $h$ in $S_{72}(\Gamma_0(6))$ by 
\begin{align}\label{DefOfh}
	h(z) =& 
	2\eta(z)^2\eta(2z)^2\eta(3z)^{10}\eta(6z)^{130}\nonumber\\	
	&+2\eta(z)\eta(2z)^7\eta(3z)^5\eta(6z)^{131}\nonumber\\
	&+2\eta(z)\eta(2z)^{-5}\eta(3z)^5\eta(6z)^{143}\nonumber\\
	&+2\eta(z)^{-19}\eta(2z)^{-1}\eta(3z)^{25}\eta(6z)^{139}\nonumber\\
	&+2\eta(2z)^{24}\eta(3z)^{120}\nonumber\\
	&+2\eta(z)^{-31}\eta(2z)^{-1}\eta(3z)^{37}\eta(6z)^{139}\nonumber\\
	&+2\eta(2z)^{132}\eta(3z)^{24}\eta(6z)^{-12}\nonumber\\
	&+E_4^{12}(z)\eta(2z)^{12}\eta(3z)^{24}\eta(6z)^{12}\nonumber\\
	&-E_4^{12}(z)\eta(2z)^{24}\eta(3z)^{24}.
\end{align}
In particular by (\ref{SLifts}), (\ref{Weight24Lift}), and (\ref{Weight72Lift}) we have 
\begin{align}
	\SLift{1}{36}{g(z)} &\equiv h(z)\otimes\chi_{12} \pmod{4}.
\end{align}
We see that if $P(h(z))\equiv 0 \pmod{4}$ then $P(h(z)\otimes\chi_{12})\equiv 0 \pmod{4}$, and so in turn we would have $P(\SLift{1}{36}{g(z)})\equiv 0 \pmod{4}$. Thus to prove the proposition it is sufficient to prove 
\begin{align}\label{prop3LastEq}
	P(h(z))\equiv 0 \pmod{4}.
\end{align}

By (\ref{prop3PByTwists}) and (\ref{DefOfh}) we see that $P(h(z))$ is an element of $S_{72}(\Gamma_0(576))$. To verify the congruence in (\ref{prop3LastEq}) we need only verify it holds up to the power of $q$ specified by Sturm's theorem, which is $6912$. This verification is done in Maple and completes the proof of the proposition.
\end{proof}

By Propositions \ref{prop1}, \ref{prop2}, \ref{prop3} and (\ref{congruenceForS}) we have
\begin{align}
	12\ell S_\ell(z)\eta(z)\Delta(z)^{s_\ell} &\equiv 0\pmod{64} &\mbox{ for }\ell\equiv 13,23\pmod{24},\\
	12\ell S_\ell(z)\eta(z)\Delta(z)^{s_\ell} &\equiv 0\pmod{128} &\mbox{ for }\ell\equiv 1,11,17,19\pmod{24}.
\end{align}
Therefore $S_\ell(z) \equiv 0\pmod{16}$ for $\ell\equiv 13,23\pmod{24}$ and 
$S_\ell(z) \equiv 0\pmod{32}$ for $\ell\equiv 1,11,17,19\pmod{24}$. This completes the proof of Theorem \ref{MainTheorem}.

\bibliographystyle{abbrv}
\bibliography{arxivTypeUpRef}

\end{document}